\documentclass[12pt]{amsart}
\usepackage{amsfonts, amsmath, amsthm}

\usepackage{graphicx}
\usepackage{epstopdf}
\usepackage{psfrag}

\newtheorem{pro}{Proposition}[section]
\newtheorem{thm}[pro]{Theorem}
\newtheorem{lem}[pro]{Lemma}

\newtheorem{cor}[pro]{Corollary}

\theoremstyle{definition}
\newtheorem{dfn}[pro]{Definition}

\theoremstyle{remark}

\newcommand{\CC}{\mathcal C}

\newcommand{\bdy}{\partial}
\newcommand{\BB}{\mathcal B}
\newcommand{\EE}{\mathcal E}
\newcommand{\DD}{\mathcal D}

\newcommand{\TT}{\mathcal T}
\newcommand{\KK}{\mathcal K}

\newcommand{\plex}[1]{\ensuremath{[{#1}]}}

\title{Almost Normal Surfaces with boundary}
\date{\today}
\author{David Bachman}
\author{Ryan Derby-Talbot}
\author{Eric Sedgwick}

\begin{document}
\begin{abstract}
We show that a strongly irreducible and $\bdy$-strongly irreducible surface can be isotoped to be almost normal in a triangulated $3$-manifold. 
\end{abstract}
\maketitle

\section{Introduction}

Throughout this paper $M$ will denote a compact, orientable, irreducible 3-manifold with (possibly empty) incompressible boundary admitting a triangulation $\TT$, and $H$ will represent a connected, 2-sided, orientable surface properly embedded in $M$ and not contained in a ball. A surface is said to be {\it strongly irreducible} if it has compressing disks for each side, and every pair of compressing disks on opposite sides intersects. A surface is {\it $\bdy$-strongly irreducible} if it has a compressing disk or $\bdy$-compressing disk for each side, and every pair on opposite sides intersects. In \cite{Bachman4} it is shown that every non-peripheral, strongly irreducible surface can be $\bdy$-compressed (possibly zero times) to a surface that is either also $\bdy$-strongly irreducible or is essential, and hence normal. (The case that $\partial M$ is a torus is also considered in \cite{BSS}.) In the following theorem we address the former case:

\begin{thm}
\label{t:main1}
Let $H$ be a strongly irreducible and $\bdy$-strongly irreducible surface in a 3-manifold $M$ with triangulation $\TT$.  Then $H$ is isotopic to a surface that is almost normal with respect to $\TT$.
\end{thm}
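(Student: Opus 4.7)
The plan is to generalize the Rubinstein--Stocking sweepout argument, which proves the closed analog of this theorem for strongly irreducible Heegaard splittings, to the setting of surfaces with boundary. The essential idea is to construct a one-parameter family of surfaces isotopic to $H$ that interpolates between configurations compressing to opposite sides, and to apply a combinatorial min-max to locate a critical surface carrying an almost normal piece.

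First, assuming $H$ separates $M$ as $V \cup_H W$, I would build a sweepout $\{H_t\}_{t \in [0,1]}$, where $H_0$ degenerates onto a spine $K_V \subset V$, $H_1$ degenerates onto a spine $K_W \subset W$, and each interior $H_t$ is isotopic to $H$. The spines must be chosen relative to $\bdy M$ so as to respect the boundary $\bdy H$; the non-separating case requires a standard modification, e.g.\ cut along $H$ and sweep in the resulting manifold. After a generic perturbation, the family is transverse to the 2-skeleton $\TT^{(2)}$ except at finitely many parameter values.

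Next, I would assign combinatorial labels to each generic $H_t$. Label $H_t$ with $V$ if some normalization-type move---an isotopy across a disk in a tetrahedron, or across an arc in a boundary triangle---can push $H_t$ toward the $V$ side while reducing the weight $|H_t \cap \TT^{(1)}|$, and analogously for the $W$ label. By strong irreducibility and $\bdy$-strong irreducibility, no $H_t$ can simultaneously carry both labels, since such a pair of moves would produce disjoint compressing or $\bdy$-compressing disks on opposite sides of $H$. Because $H_t$ must be $V$-labeled for $t$ near $0$ and $W$-labeled for $t$ near $1$, the labeling must transition somewhere; at the transition, $H_{t^*}$ admits no reducing move on either side, forcing it (after small adjustment) to be normal or almost normal.

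The main obstacle will be verifying that the transitional surface is \emph{almost} normal, rather than merely failing to admit any reducing move. This requires a careful local analysis of how the intersection pattern of $H_t$ with each tetrahedron $\Delta$ evolves through a critical time, and in particular showing that exactly one tetrahedron (or boundary region) acquires an octagonal or unnormal annular piece while all others remain filled by normal disks and arcs. Handling the boundary in parallel is the most delicate point: the curves $\bdy H_t$ sweep across $\bdy M$ simultaneously, so the normalization moves, the labeling scheme, and the definition of almost normality must all be synchronized across $M$ and $\bdy M$, and almost normal configurations that arise from a $\bdy$-compression (as opposed to an interior compression) must be properly incorporated into the combinatorial classification.
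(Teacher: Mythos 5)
Your plan imports the Rubinstein--Stocking sweepout, but that machinery is not available under the hypotheses of this theorem, and this is a genuine gap rather than a detail. A sweepout $\{H_t\}$ that degenerates onto spines $K_V\subset V$ and $K_W\subset W$ through surfaces isotopic to $H$ exists precisely when the two complementary pieces are (relative) compression bodies, i.e.\ when $H$ is a Heegaard surface. Here $H$ is merely a connected, $2$-sided, strongly irreducible and $\bdy$-strongly irreducible surface, not contained in a ball; it need not be Heegaard and need not even separate $M$, so the sides $V$ and $W$ have no spines onto which $H$ collapses, and your one-parameter family does not exist. (Cutting along $H$ in the non-separating case does not repair this: the cut-open manifold is again not a compression body on either side of the copies of $H$.) The paper's Sections 2--3 are exactly the substitute for the missing sweepout: strong irreducibility and $\bdy$-strong irreducibility are recast as disconnectedness of the disk complexes $\plex{\CC(H)}$ and $\plex{\CC\BB(H)}$, and the role of your family $\{H_t\}$ is played by a \emph{compressing sequence} of surfaces obtained by surgery along compressing and edge-compressing disks, with $H$ realized as a thick level of a minimal-size sequence. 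The connectivity of the one-sided disk complex (Lemma \ref{l:OneSideConnected}) is what replaces your ``no surface carries both labels'' step.

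A second, related gap is in your labeling argument itself. A normalization-type move that reduces $|H_t\cap\TT^{(1)}|$ corresponds to a compressing or edge-compressing disk for $(H_t,\TT^1)$ or $(H_t,\TT^2)$, and such disks are frequently \emph{dishonest}: an edge-compressing disk along an interior edge, or a compression along a loop that is inessential in $H$, is not a compressing or $\bdy$-compressing disk for $H$ at all. So disjoint reducing moves on opposite sides do not directly contradict strong irreducibility or $\bdy$-strong irreducibility, and your claim that no $H_t$ carries both labels does not follow as stated. The paper spends Sections 3 and 4 (the honest/dishonest distinction, the width and size complexities, and the shadow-disk labeling across the isotopy relative to $\TT^1$) precisely to bridge this, culminating in the statement that $H$ can be isotoped so that $\plex{\CC\EE(H,\TT^2)}$ is disconnected; only then does the tetrahedron-by-tetrahedron analysis (your ``transitional surface'' step, Proposition \ref{p:normalOrAlmostNormal}) force an octagon or tubed/annular piece in exactly one tetrahedron. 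Your final paragraph correctly identifies that this local analysis is delicate, but without the disk-complex (or some equivalent) framework the two earlier steps of your outline do not go through.
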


When $M$ is closed, this result is due independently to Stocking \cite{Stocking} and Rubinstein \cite{Rubinstein}. Related results when $M$ has boundary have been established by Bachman \cite{Bachman1},  Coward \cite{Coward}, Rieck-Sedgwick \cite{Rieck-Sedgwick}, Wilson \cite{Wilson}, and Johnson \cite{Johnson}. 

We have several motivations. First, Theorem~\ref{t:main1} has both fewer hypotheses and a stronger conclusion than previous results. Secondly, the theorem is useful for attacking various questions relating Dehn surgery and amalgamation along tori to the set of incompressible surfaces and strongly irreducible Heegaard surfaces in a given 3-manifold \cite{BDTS1}, \cite{BDTS2}. Lastly, we present here what we consider to be a more systematic and modern approach to this topic, one that can be generalized to arbitrary {\it topological index} (this program will be completed by the first author in \cite{Bachman3}, see \cite{Bachman2} for definitions).

Our approach exploits properties of disk complexes, defined in Section 2, associated with the surface $H$. Sections 3 and 4 show how strong irreducibility and $\partial$-strong irreducibility can be preserved (under isotopy) as we pass to disk complexes relative to the 1-skeleton and 2-skeleton, respectively. In Section 5, we analyze the structure of the resulting surface in each tetrahedron, concluding that it is almost normal. 

\section{Compressing Disks and Disk Complexes}

In this section we give basic definitions and facts regarding compressing disks and associated disk complexes.  Recall that $H$ is a connected, 2-sided, orientable surface properly embedded, but not contained in a ball, in a compact, orientable, irreducible 3-manifold $M$ with (possibly empty) incompressible boundary and triangulation $\TT$.

\begin{dfn}
A properly embedded loop or arc $\alpha$ on $H$ is said to be {\it inessential} if it cuts off (i.e.~it bounds or cobounds, with a subarc of $\bdy H$) a subdisk of $H$, and {\it essential} otherwise.
\end{dfn}

We define three types of compressing disks, extending the standard definition to account for the presence of a given complex $\KK$.   We are primarily concerned with the cases when $\KK = \emptyset$ or $\TT^i$, the $i$-skeleton of our triangulation $\TT$.

\begin{dfn}
A {\it compressing disk} $C$ for $(H,\KK)$ is a disk embedded in $M-N(\KK)$ so that $C \cap H=\bdy C$ is an essential loop in $H-N(\KK)$.

Let $\CC(H,\KK)$ denote the set of  compressing disks for $(H,\KK)$.
\end{dfn}

\begin{dfn}
A {\it $\bdy$-compressing disk}  $B$ for $(H,\KK)$ is a disk embedded in $M-N(\KK)$ so that $\bdy B=\alpha \cup \beta$, where $\alpha=B \cap H$ is an essential arc on $H-N(\KK)$ and $B \cap \bdy M=\beta$.

Let $\BB(H,\KK)$ denote the set of  $\bdy$-compressing disks for $(H,\KK)$.
\end{dfn}

\begin{dfn} 
An {\it edge-compressing disk} $E$ for $(H,\KK)$ is a disk embedded in $M$ so that $\bdy E =\alpha \cup \beta$, where $\alpha= E  \cap H$ is an arc on $H$ and $\beta = E \cap \KK \subset e$, for some edge $e$ of $\KK$.

Let $\EE(H,\KK)$ denote the set of  edge-compressing disks for $(H,\KK)$. Figure~\ref{fig:compression_types} illustrates each type of compressing disk. 
\end{dfn}

\begin{figure}[h]
\psfrag{H}{\small $H$}
\psfrag{D}{\small $D$}
\psfrag{T}{\small $\mathcal T^1$}
\psfrag{p}{\small $\partial M$}
   \centering
   \includegraphics[height=3in]{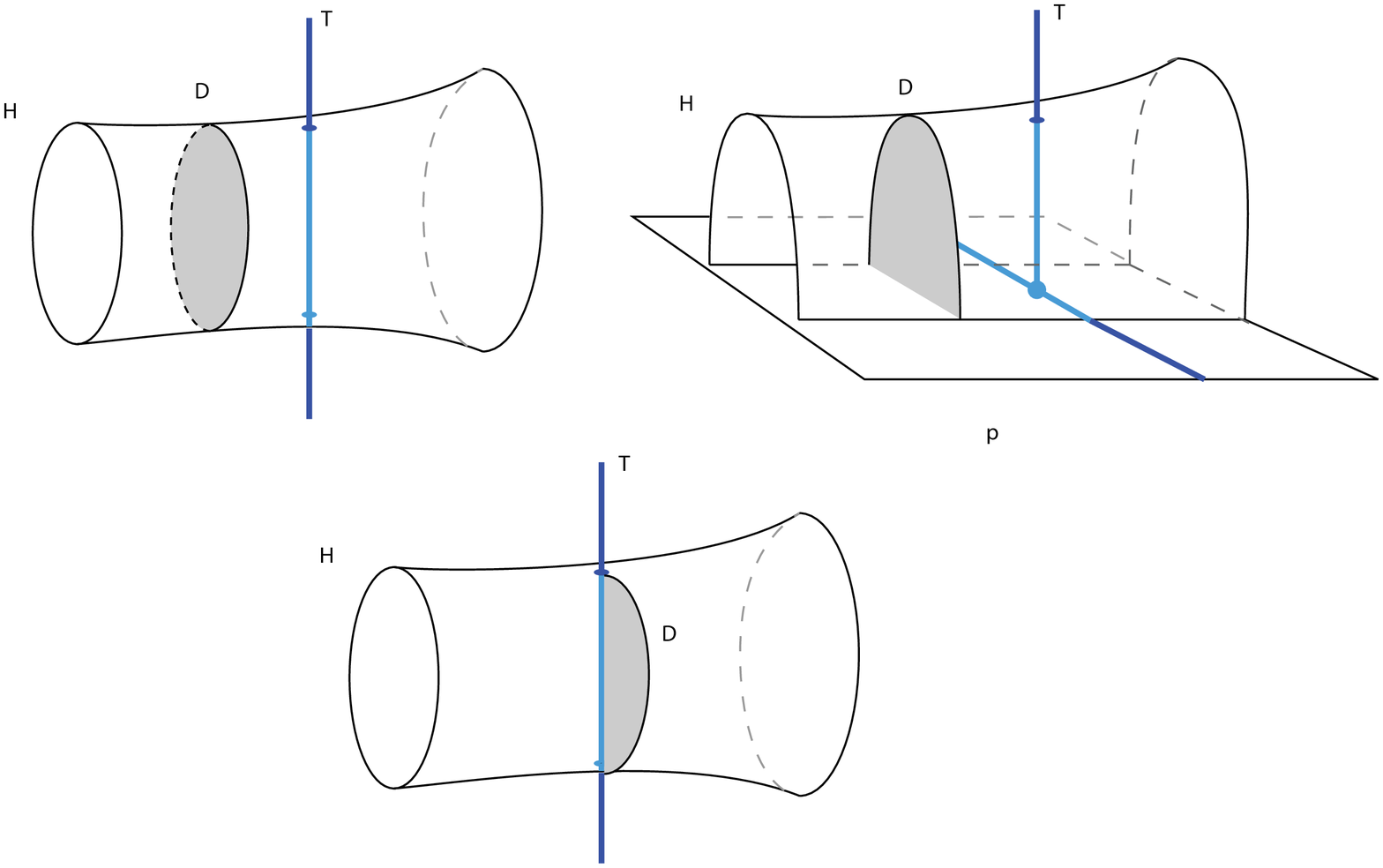} 
   \caption{Clockwise from upper left: $D$ is a compressing disk, $\partial$-compressing disk, and edge-compressing disk for $(H, \mathcal T^1)$.}
   \label{fig:compression_types}
\end{figure}

\begin{dfn}
\label{dCompression}
Suppose $D$ is a compressing disk, $\bdy$-compressing disk, or edge-compressing disk for $(H,\KK)$.    We construct a surface $H/D$, which is said to have been obtained by {\it surgering}  along $D$, as follows. Let $M(H)$ denote the manifold obtained from $M$ by cutting open along $H$. Let $N(D)$ denote a neighborhood of $D$ in $M(H)$. Construct the surface $H'$ by removing $N(D) \cap H$ from $H$ and replacing it with the frontier of $N(D)$ in $M(H) $. The surface $H/D$ is then obtained from $H'$ by discarding any component that lies in a ball.
\end{dfn}

\begin{dfn}
We say $H$ is {\it compressible} or {\it $\bdy$-compressible} if $\CC(H,\emptyset)$ or $\BB(H,\emptyset)$, respectively, is non-empty, and {\it incompressible} or {\it $\bdy$-incompressible} otherwise.  If $D$ is a compressing or $\bdy$-compressing disk for $H$ then $H/D$ is said to have been obtained by {\it compressing} or {\it $\bdy$-compressing along $D$}.
\end{dfn}

\begin{dfn}
We denote the set $\CC(H,\KK) \cup \BB(H,\KK)$ as $\CC \BB (H,\KK)$ and the set $\CC(H,\KK) \cup \EE(H,\KK)$ as $\CC \EE(H,\KK)$.
\end{dfn}

\begin{dfn}
We say $H$ is {\it strongly irreducible} if there are compressing disks in $\CC(H,\emptyset)$ on opposite sides of $H$, and any such pair of compressing disks has non-empty intersection. We say $H$ is {\it $\bdy$-strongly irreducible} if there are disks in $\CC \BB(H,\emptyset)$ on opposite sides of $H$, and any such pair of disks has non-empty intersection.
\end{dfn}

For the remainder of this section, let $\DD(H,\KK)$ denote either $\CC(H,\KK)$, $\CC \BB(H,\KK)$ or  $\CC \EE(H,\KK)$. We define an equivalence relation on disks as follows:

\begin{dfn}
Disks $D,D' \in \DD(H,\KK)$ are {\it equivalent}, $D \sim D'$, if $D$ and $D'$ are isotopic in $M$ through disks in $\DD(H,\KK)$.
\end{dfn}

We now distinguish three special types of pairs of disks:

\begin{dfn}
\label{dDiskPairs}
Suppose that $D,D' \in \DD(H,\KK)$.  We say that the pair $(D,D')$ is a

\begin{enumerate}
\item {\it disjoint pair} if $D \cap D' = \emptyset$.
\item {\it canceling pair} if $D \cap D'$ is a single point on an edge of $\KK$.
\item {\it coincident pair} if $D \cap D' = D \cap e = D' \cap e$ for an edge $e$ of $\KK$.
\end{enumerate}
\end{dfn}

Since only edge-compressing disks meet $\KK$, and then only in edges, a canceling pair is a pair of edge-compressing disks for opposite sides of $H$ whereas a coincident pair is a pair of edge-compressing disks for the same side of $H$.

\begin{dfn}
\label{dDiskComplex}
The {\it disk complex} $\plex{\DD(H,\KK)}$ is the graph whose:

\begin{enumerate}
\item vertices correspond to equivalence classes $[D]$, where $D \in \DD(H,\KK)$.
\item edges correspond to pairs of equivalence classes $([D],[D'])$, where $D$ and $D'$ are disjoint away from a neighborhood of the 1-skeleton, i.e.~$(D,D')$ is a disjoint, canceling, or coincident pair. 
\end{enumerate}

Let  $\plex{\DD(H)} =  \plex{\DD(H,\emptyset)}$.
\end{dfn}

The following is a weaker version (in a broader context) of a result of McCullough \cite{McCullough}. See also Theorem 4.2 of \cite{cho}.

\begin{lem}
\label{l:OneSideConnected}
Let $\DD_+(H,\KK)$ be the subset of $\DD(H,\KK)$ consisting of disks on one side of $H$, denoted $H_+$. Then $\plex{\DD_+(H,\KK)}$ is either empty or has one component.
\end{lem}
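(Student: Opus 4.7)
The plan is to show that $\plex{\DD_+(H,\KK)}$ is connected by induction on the geometric intersection number $|D \cap D'|$ for general-position representatives of two arbitrary vertices $[D], [D']$. The base case $|D \cap D'| = 0$ (or, in the edge-compressing setting, $D \cap D'$ contained in edges of $\KK$, giving a coincident pair) yields an edge of the graph immediately via Definition~\ref{dDiskPairs}.

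For the inductive step I first handle loops. An innermost loop of $D \cap D'$ on $D$ bounds a subdisk $E \subset D$ disjoint from $D'$; together with the subdisk $F \subset D'$ that it bounds, $E$ forms a sphere on the $H_+$ side of $H$, which bounds a ball by irreducibility of $M$. A disk-swap across this ball produces $D'' \sim D'$ in $\DD_+(H,\KK)$ with $|D \cap D''| < |D \cap D'|$, and the inductive hypothesis closes this sub-case.

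When $D \cap D'$ consists only of arcs, I take an outermost arc $\alpha$ on $D$ cutting off a subdisk $E \subset D$ with $\bdy E = \alpha \cup \beta$, where $\beta$ lies in $H$, in $\bdy M$, or on an edge of $\KK$ according to the type of $D$. Cutting $D'$ along $\alpha$ produces two subdisks $E_1, E_2 \subset D'$, and the candidate disks $D_i = E_i \cup_\alpha E$ can be analyzed via irreducibility of $M$, incompressibility of $\bdy M$, and (in the edge-compressing case) the combinatorics of $\KK$. At least one $D_i$, say $D_1$, lies in $\DD_+(H,\KK)$, since otherwise a band-sum argument would force $\bdy D'$ itself to be inessential. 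The disk $D_1$ satisfies $|D \cap D_1| < |D \cap D'|$, so by induction $[D_1]$ is connected to $[D]$. It remains to connect $[D_1]$ to $[D']$: if $D_2 \notin \DD_+(H,\KK)$, then a ball-swap shows $D_1 \sim D'$ directly; if $D_2 \in \DD_+(H,\KK)$, parallel-pushing the copies of $E$ onto opposite sides in $H_+$ makes $(D_1, D_2)$ a disjoint or coincident pair, giving an edge $[D_1]$---$[D_2]$, and a secondary induction applied to the pair $(D_2, D')$, which has smaller auxiliary complexity, connects $[D_2]$ back to $[D']$.

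The main obstacle I anticipate is the ``both essential'' sub-case of the arc reduction, in which neither $D_1$ nor $D_2$ is automatically isotopic to $D'$; here one must verify that a secondary induction terminates and that the parallel-copy perturbations do not reintroduce essential intersections. The remaining ingredients---irreducibility of $M$, incompressibility of $\bdy M$, and the edge structure of $\KK$---enter in standard ways to rule out degenerate configurations throughout the argument.
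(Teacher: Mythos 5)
Your overall strategy---reduce $|D\cap D'|$ by innermost-loop and outermost-arc surgery, with the coincident-pair observation handling same-side edge-compressing disks that meet only along an edge of $\KK$---is essentially the paper's argument, recast as a direct induction rather than a minimal counterexample. The genuine soft spot is exactly the one you flag: reconnecting the surgered disk to $[D']$. Both of your devices for this are shaky. In the loop case you assert $D''\sim D'$ by a ball-swap, but the ball bounded by $E\cup F$ may contain other sheets of $D'$ (and of $D$), so the swap need not be an isotopy through disks of $\DD_+(H,\KK)$. In the arc case there is no sphere at all, so ``a ball-swap shows $D_1\sim D'$'' when $D_2\notin\DD_+(H,\KK)$ has no justification as stated, and your ``secondary induction'' in the both-essential sub-case is left without a specified, terminating complexity---by your own admission.

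All of this machinery is unnecessary, and the missing observation is the paper's key move: each surgered disk is assembled from a subdisk of $D'$ together with a subdisk $E\subset D$ whose interior is disjoint from $D'$, so after an arbitrarily small push-off of the parallel copy of the $D'$-portion, the surgered disk is disjoint from $D'$ away from a neighborhood of $\KK$. Hence $[D'']$ (resp.\ $[D_1]$) either equals $[D']$ or spans an edge with it by Definition~\ref{dDiskComplex}, and since $|D\cap D''|<|D\cap D'|$ (resp.\ $|D\cap D_1|<|D\cap D'|$), your induction closes immediately---no isotopy claims, no case split on whether $D_2$ is essential, no secondary induction. What you do still need, and what the paper also uses, is that at least one of the two arc-surgered pieces meets $H-N(\KK)$ in an essential arc or loop (otherwise $\bdy D'$ would be inessential), and that in the residual configuration where two same-side edge-compressing disks have disjoint interiors but overlap along an edge, the coincident pair supplies the required edge of $\plex{\DD_+(H,\KK)}$.
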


\begin{proof}
Suppose to the contrary that $\plex{\DD_+(H,\KK)}$ has 2 or more components.   Then choose disks $D$ and $D'$ that: 1) represent vertices in distinct components; 2) intersect transversally; and 3) minimize the total number of curves of intersection $|D \cap D'|$, subject to 1) and 2).   Note that $D \cap D' \neq \emptyset$, for otherwise $[D]$ and $[D']$ lie in the same component.

If the interiors of $D$ and $D'$ meet in a closed curve, choose an innermost such on $D$.   Then a cut and paste operation on $D'$ at that curve, along with a slight isotopy, produces a disk $D''$ disjoint from $D'$, hence in the same component, that meets $D$ transversally but in fewer curves.   This is a contradiction.

If  $D \cap D'$ contains an arc not contained in the boundary of the disks, let $\delta$ be an outermost such arc on $D$.  Cut, paste and slightly isotope $D'$ along $\delta$ to produce disks $D''$ and $D'''$, each of which meets $D$ in fewer curves than $D'$, and at least one of which is guaranteed to meet $H-N(\KK)$ in an essential arc or loop. Assume this is true of $D''$. Note that $D''$ may be in $\CC(H,\KK)$, even if neither $D$ nor $D'$ were. It is also worth noting that this operation can be performed regardless of whether $\delta$ has both endpoints on $H$, or one or more endpoints on $\partial M$ or an edge $e$.   In the latter  cases, both $D$ and $D'$ must have the same type, $\BB$ or $\EE$, as does the essential disk produced. In any case, $D''$ becomes a disk disjoint from $D'$, hence in the same component, which is a contradiction since it meets $D$ fewer times than $D'$. 

The only remaining case is that $D$ and $D'$ are a pair of edge-compressing disks for the same side of $H$ whose interiors are disjoint but which meet along an edge $e$ of $\KK$.  This describes a coincident pair, thus an edge joins $[D]$ and $[D']$.
\end{proof}

Let $\DD_-(H,\KK)$ now denote the subset of $\DD(H,\KK)$ consisting of those disks on the opposite side of $H$ as the disks in $\DD_+(H,\KK)$. Thus, $\plex{\DD(H,\KK)}$ is the union of $\plex{\DD_+(H,\KK)}$ and $\plex{\DD_-(H,\KK)}$ along with any edges between them.   Applying the previous lemma once to each side yields:

\begin{cor} Let $H \subset M$ be a separating surface that meets a complex $\KK$ transversally.   Then $\plex{\DD(H,\KK)}$ has either 0, 1 or 2 components.
\end{cor}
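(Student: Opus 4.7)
The plan is to deduce the corollary almost immediately from Lemma~\ref{l:OneSideConnected} together with the hypothesis that $H$ separates $M$. First I would observe that, because $H$ is two-sided and separating, every disk $D \in \DD(H,\KK)$ lies entirely on one of the two sides of $H$: the interior of $D$ is disjoint from $H$, and by connectedness it is contained in exactly one component of $M \setminus H$. Consequently, there is a disjoint-union decomposition $\DD(H,\KK) = \DD_+(H,\KK) \sqcup \DD_-(H,\KK)$ at the level of vertex sets of $\plex{\DD(H,\KK)}$.

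Next I would invoke Lemma~\ref{l:OneSideConnected} twice, once for each side. Applied to $H_+$ it tells us that $\plex{\DD_+(H,\KK)}$ is either empty or connected; applied to $H_-$ it gives the analogous statement for $\plex{\DD_-(H,\KK)}$. Thus each of the two subcomplexes contributes at most one component to $\plex{\DD(H,\KK)}$.

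Finally, I would note that, as described in the paragraph preceding the corollary, $\plex{\DD(H,\KK)}$ is the union of $\plex{\DD_+(H,\KK)}$ and $\plex{\DD_-(H,\KK)}$ together with whatever edges join vertices on opposite sides of $H$. Adding these cross-edges can only merge the (at most) two side-components; it cannot create new ones. Hence the total number of components is at most $2$, with the cases $0$, $1$, $2$ corresponding respectively to: both sides empty; exactly one side empty or both sides non-empty but joined by some cross-edge; and both sides non-empty with no cross-edges between them. No step here presents a serious obstacle — the content is entirely in Lemma~\ref{l:OneSideConnected}, and the corollary is a bookkeeping consequence of the separating hypothesis.
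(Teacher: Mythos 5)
Your proposal is correct and follows the paper's own argument: decompose the disk set by side of the separating surface, apply Lemma~\ref{l:OneSideConnected} once to each side, and observe that edges joining the two subcomplexes can only merge, never create, components. Nothing further is needed.
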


By definition, $\plex{\CC(H,\KK)} = \emptyset$ means $H$ is incompressible. If $\plex{\CC_+(H,\KK)}$ and $\plex{\CC_-(H,\KK)}$ are non-empty, $\plex{\CC(H,\KK)}$ will be connected if and only if there is a pair of compressing disks, one for each side, that are disjoint. The same holds when we consider the union of compressing and $\bdy$-compressing disks for $H$.

\begin{cor} 
\label{c:SI}
Let $H \subset M$ be a separating surface.  Then,
\begin{enumerate}
\item $H$ is {\it incompressible} $\iff$ $\plex{\CC(H)}=\emptyset$.
\item $H$ is {\it incompressible} and $\bdy$-incompressible $\iff$ $\plex{\CC \BB(H)}=\emptyset$.
\item $H$ is {\it strongly irreducible} $\iff$ $\plex{\CC(H)}$ is disconnected.
\item $H$ is $\bdy$-{\it strongly irreducible} $\iff$ $\plex{\CC \BB(H)}$ is disconnected.
\end{enumerate}
\end{cor}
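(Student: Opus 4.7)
The plan is to recognize this as essentially an unpacking of definitions together with Lemma \ref{l:OneSideConnected} and the corollary preceding it. The key simplification to flag at the outset is that when $\KK = \emptyset$, the canceling and coincident pair conditions of Definition \ref{dDiskPairs} require a non-existent edge of $\KK$ and are therefore vacuous. Consequently, the only edges in $\plex{\CC(H)}$ and $\plex{\CC \BB(H)}$ come from pairs of disks that are genuinely disjoint.

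Parts (1) and (2) I would dispatch immediately. The vertices of $\plex{\CC(H)}$ are equivalence classes of compressing disks for $H$, so $\plex{\CC(H)} = \emptyset$ iff $H$ admits no compressing disk, which is the definition of incompressibility. Part (2) is the same statement applied to $\CC \BB(H) = \CC(H) \cup \BB(H)$.

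For (3) I would argue both directions using the decomposition $\plex{\CC(H)} = \plex{\CC_+(H)} \cup \plex{\CC_-(H)}$ plus any cross-side edges. For the forward implication, strong irreducibility supplies non-empty $\plex{\CC_\pm(H)}$, each a single component by Lemma \ref{l:OneSideConnected}; the intersection hypothesis forbids any disjoint cross-side pair, which (by the preliminary observation) forbids any cross-side edge, so $\plex{\CC(H)}$ has exactly two components. Conversely, if $\plex{\CC(H)}$ is disconnected, the corollary to Lemma \ref{l:OneSideConnected} forces exactly two components, so both $\plex{\CC_\pm(H)}$ are non-empty and no cross-side edge exists; translating back, compressing disks exist on both sides and no cross-side pair is disjoint, i.e., $H$ is strongly irreducible.

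Part (4) is the identical argument with $\CC$ replaced throughout by $\CC \BB$, observing that Lemma \ref{l:OneSideConnected} applies equally to the one-sided subcomplexes of $\plex{\CC \BB(H)}$. I do not foresee any substantive obstacle here — all the real work has been packaged into Lemma \ref{l:OneSideConnected} and its corollary, and the proof is just the bookkeeping needed to match ``disjoint pair'' with ``edge'' under the hypothesis $\KK = \emptyset$.
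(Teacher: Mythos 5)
Your proposal is correct and matches the paper's own treatment: the paper proves this corollary exactly by the preceding discussion, namely that $\plex{\CC(H,\KK)}=\emptyset$ is the definition of incompressibility, that when $\KK=\emptyset$ edges of the complex are precisely disjoint pairs, and that Lemma~\ref{l:OneSideConnected} together with its corollary (each one-sided subcomplex empty or connected, hence $0$, $1$, or $2$ components total) reduces disconnectedness to the existence of disks on both sides with no disjoint cross-side pair. Your bookkeeping for parts (3) and (4) is exactly this argument, so there is nothing to add.
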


To prove Theorem~\ref{t:main1}, we proceed from here in the following manner. Assume that $H$ is a surface for which $\plex{\CC(H)}$ and $\plex{\CC \BB(H)}$ are both disconnected. In Sections 3 and 4 we show that isotoping $H$, first with respect to the 1-skeleton, and then with respect to the 2-skeleton, makes an associated sequence of disk complexes each disconnected. We then use this in Section 5 to conclude that $H$ is almost normal. The flow chart in Figure~\ref{f:flow_chart} shows how the key results of the different sections of the paper link together to prove Theorem~\ref{t:main1}.

\begin{figure}[h]

\psfrag{a}{\bf Proving Theorem ~\ref{t:main1}:}

\psfrag{b}{$H$ is strongly irreducible and $\partial$-strongly irreducible}

\psfrag{c}{\scriptsize Corollary \ref{c:SI}}

\psfrag{d}{$\plex{\CC(H)}$ and $\plex{\CC \BB(H)}$ are disconnected}

\psfrag{f}{\small \begin{tabular}{c} {\scriptsize Proposition \ref{p1Skeleton}} \\ {\tiny (isotope $H$)} \end{tabular}}

\psfrag{g}{$\plex{\CC \EE(H,\mathcal T^1)}$ is disconnected} 

\psfrag{i}{\small \begin{tabular}{l} {\scriptsize Proposition \ref{p:CE1toCE2}} \\ {\tiny (isotope $H$ rel $\mathcal T^1$)} \end{tabular}}

\psfrag{j}{$\plex{\CC \EE(H,\mathcal T^2)}$ is disconnected}

\psfrag{k}{\scriptsize Proposition \ref{p:normalOrAlmostNormal}}

\psfrag{l}{$H$ is almost normal}

\hspace{-4.5cm}
\includegraphics[height=3.5in]{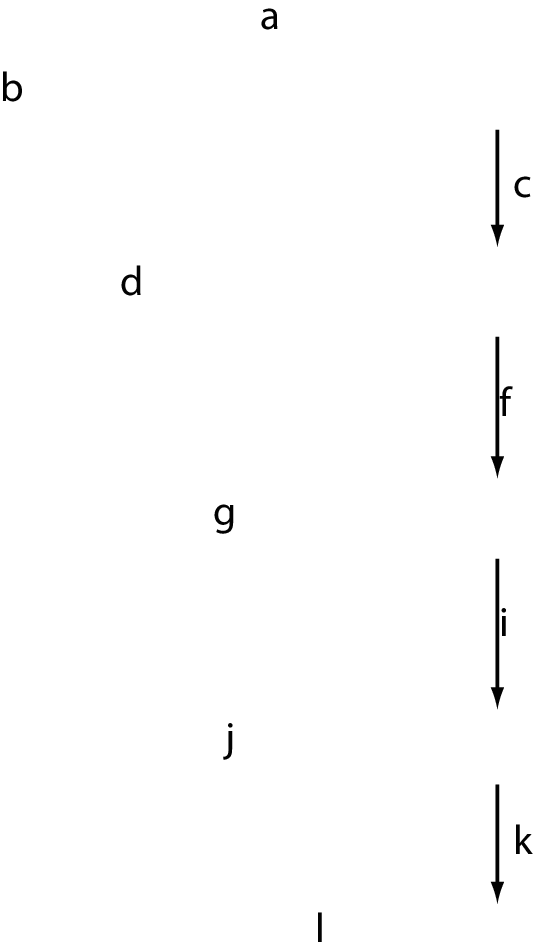}
\label{f:flow_chart}
\caption{How the main results of each section link together to prove Theorem \ref{t:main1}.}

\end{figure}

\section{The 1-skeleton: $\plex{\CC(H)}$ and $\plex{\CC \BB(H)}$ disconnected implies $\plex{\CC \EE(H,\TT^1)}$ is disconnected}

In this section we prove that $H$ can be isotoped so that it is ``strongly irreducible'' with respect to the 1-skeleton.

\begin{pro}
\label{p1Skeleton}
Suppose $\plex{\CC(H)}$ and $\plex{\CC \BB(H)}$ are disconnected. Then  $H$ may be isotoped so that $\plex{\CC \EE(H,\TT^1)}$ is disconnected.
\end{pro}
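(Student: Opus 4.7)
The plan is to isotope $H$ to a \emph{taut} position that minimizes the weight $w(H) = |H \cap \TT^1|$ among all isotopies of $H$ in general position with respect to $\TT^1$, and to show that in this position both $\CC\EE_\pm(H,\TT^1)$ are non-empty while no edge of $\plex{\CC\EE(H,\TT^1)}$ joins a vertex on the $+$ side to one on the $-$ side.

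For non-emptiness, I would start from compressing disks $C_+, C_-$ on opposite sides of $H$ provided by the disconnectedness of $\plex{\CC(H)}$ (Corollary \ref{c:SI}), and isotope each (rel $\partial C_\pm$) to minimize $|C_\pm \cap \TT^1|$. If $C_\pm \cap \TT^1 = \emptyset$, then $C_\pm \in \CC(H, \TT^1) \subseteq \CC\EE(H, \TT^1)$. Otherwise, choosing a point $p \in C_\pm \cap \TT^1$ on an edge $e$ and an arc $\gamma$ in $C_\pm$ from $p$ to $\partial C_\pm$ that meets $\TT^1$ only at $p$, a standard surgery on a thin rectangular neighborhood of $\gamma$ in $C_\pm$ (joining its two long sides by a sub-arc of $e$ through $p$ and a sub-arc of $H$ near $\gamma \cap \partial C_\pm$) yields an edge-compressing disk for $(H,\TT^1)$ on the same side as $C_\pm$.

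To prove disconnectedness, I would argue by contradiction: suppose a cross-side edge $(D_+, D_-)$ exists in $\plex{\CC\EE(H,\TT^1)}$. Because coincident pairs are same-side by definition, such an edge must come from a disjoint or a canceling pair. In the canceling case, $D_+$ and $D_-$ are edge-compressing disks meeting at one point $p$ on an edge $e$, and smoothing their union at $p$ yields an isotopy of $H$ that cancels two points of $H \cap e$, contradicting tautness. In the disjoint case, I would replace $D_\pm$ by an element of $\CC\BB_\pm(H,\emptyset)$: compressing disks for $(H,\TT^1)$ are already in $\CC(H,\emptyset)$, while an edge-compressing disk can be slid along its edge $e$ toward an endpoint of $e$ on $\bdy M$ to yield a $\bdy$-compressing disk for $H$. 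The resulting disjoint pair on opposite sides then contradicts the disconnectedness of $\plex{\CC\BB(H)}$.

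The hardest step is the disjoint case with an edge-compressing disk whose supporting edge $e$ has both endpoints at interior vertices of $\TT$: a single slide does not directly produce a $\bdy$-compressing disk for $H$. One likely needs to use the full path through $\plex{\CC\EE(H,\TT^1)}$ that purportedly connects the two sides and to iterate either the canceling-pair weight reduction or a similar local move until the reduction to the boundary-slide case becomes available. Verifying that the perturbations and surgeries preserve the essentiality of the relevant arcs on $H - N(\TT^1)$ and do not produce disks that lie in balls is another technical point to handle carefully.
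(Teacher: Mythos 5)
Your strategy---isotope $H$ to minimize $|H \cap \TT^1|$ and then argue directly---is genuinely different from the paper's, and it has a fatal gap at the step you treat most casually: showing that $\CC\EE(H,\TT^1)$ is non-empty on \emph{both} sides of $H$ at the weight-minimizing position. Your rectangle construction does not produce an edge-compressing disk: the two long sides of the band $N(\gamma;C_\pm)$ lie in the interior of $C_\pm$, hence in $M-(H\cup\TT^1)$, whereas the boundary of an edge-compressing disk must lie entirely in $H\cup e$ (Definition of $\EE(H,\KK)$); adding ``flaps'' to reach a subarc of $e$ does not repair this. Nor is the intended conclusion true in general: when every compressing disk on the $+$ side meets $\TT^1$, there is no reason any subarc of an edge on that side is parallel into $H$ in the complement of $H\cup\TT^1$, so no edge-compressing disk need exist on that side at all. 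This existence problem is precisely why the paper does not simply minimize weight: it builds a compressing sequence (Lemma \ref{lCompSequence}) whose intermediate terms are isotopic to $H$, positions $H$ as a thick level of a sequence of minimal size, and there the two disks producing the adjacent lower-width terms are vertices of $\plex{\CC\EE(H,\TT^1)}$ by construction; disconnectedness is then obtained by the path-substitution and size-reduction argument, not from weight minimality of a single surface.

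The cross-edge half of your argument also has errors, though they are more repairable. A compressing disk for $(H,\TT^1)$ is \emph{not} automatically in $\CC(H,\emptyset)$: dishonest compressing disks have boundary essential in $H-N(\TT^1)$ but inessential in $H$ (this is the whole point of the honest/dishonest distinction, Figure \ref{fig:honest_and_dishonest_compressing_disk}); to exclude them at minimal weight you would need the content of Lemma \ref{l:widthchange}, that dishonest surgery yields an isotopic surface meeting $\TT^1$ fewer times, which you never argue. For a canceling pair along a \emph{boundary} edge both disks are honest $\bdy$-compressing disks and smoothing does not cancel intersection points, so ``contradicting tautness'' fails there; the correct conclusion (as in Lemma \ref{ledgeCompression}) is that a small perturbation makes them disjoint $\bdy$-compressing disks on opposite sides, contradicting disconnectedness of $\plex{\CC\BB(H)}$ via Lemma \ref{l:OneSideConnected}. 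Finally, ``sliding an edge-compressing disk along $e$ toward $\bdy M$'' is not a legitimate isotopy: the slide must cross the other points of $H\cap e$. Ironically, the case you single out as hardest (interior edges) would vanish at a true weight minimum, since such edge-compressions are themselves weight-reducing isotopies; the irreducible difficulty in your approach is the two-sided existence statement, and that is what the paper's thin-position machinery is built to supply.
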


The remainder of this section is a sequence of lemmas proving this proposition.  The main idea is to find a suitable position for $H$ as a {\it thick level} in a {\it compressing sequence}, which we will introduce below.

Note that some compressing disks for $(H,\TT^1)$ are compressing disks for $H = (H,\emptyset)$ and some are not. We make this distinction in the following definition.

\begin{dfn}
Let $D \in \CC \EE(H,\TT^1)$. If $D \in \CC \BB(H,\emptyset)$ then we say $D$ is {\it honest}. Otherwise, we say $D$ is {\it dishonest}. (See Figure~\ref{fig:honest_and_dishonest_compressing_disk}.)
\end{dfn}

\begin{figure}[h]
\psfrag{H}{$H$}
\psfrag{p}{$\TT^1$}
\psfrag{D}{$D$}
   \centering
   \includegraphics[width=4.5in]{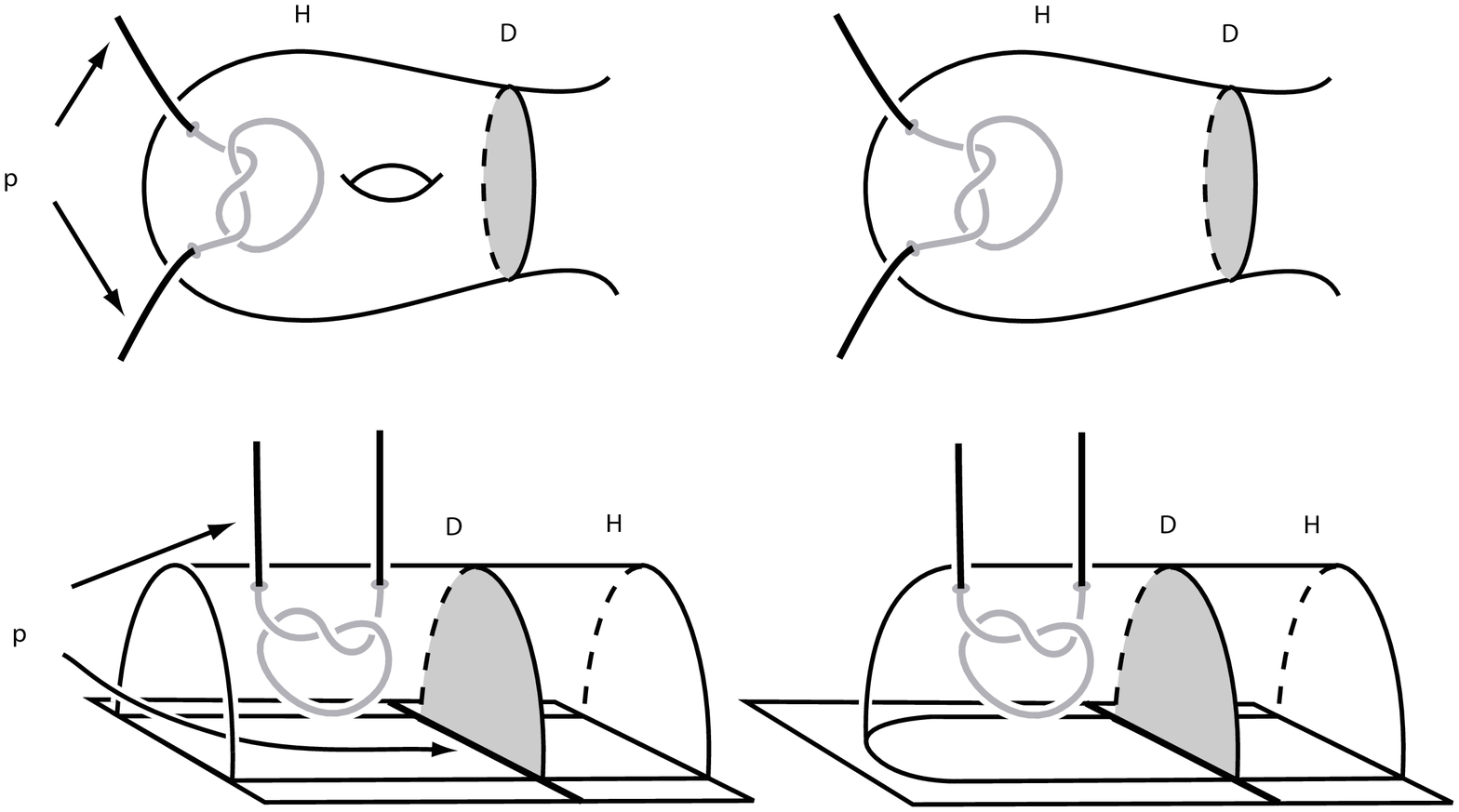}
   \caption{Starting from the top left and going clockwise, $D$ is: an honest compressing disk, a dishonest compressing disk, a dishonest edge-compressing disk incident to a boundary edge of $\TT^1$, and an honest edge-compressing disk incident to a boundary edge of $\TT^1$.}
   \label{fig:honest_and_dishonest_compressing_disk}
\end{figure}

Note that edge-compressing disks incident to interior edges of $\TT^1$ are dishonest, and if $\bdy M$ is incompressible and $|\partial H \cap \TT^1|$ is minimized, edge-compressing disks incident to boundary edges are honest.

We now define a complexity on $H$ (rel $\TT^1$), and show that both honest and dishonest compressions decrease complexity.

\begin{dfn}
If $H$ is empty, then we define the {\it width} of $H$, $w(H)$, to be $\emptyset$. If $H$ is connected, then the {\it width} of $H$, $w(H)$, is the pair, $(-\chi(H), |\TT^1 \cap H|)$. If $H$ is disconnected, then its {\it width} is the ordered set of the widths of its components, where we include repeated pairs and the ordering is non-increasing. Comparisons are made lexicographically at all levels.
\end{dfn}

Recall that $H/D$ denotes the surface obtained by surgering $H$ along $D$ and discarding any components that lie in a ball.
\begin{lem}
\label{l:widthchange}
Let $D \in \CC \EE(H,\TT^1)$. Then $w(H/D)<w(H)$.  If $D$ is dishonest, then $H/D$ is isotopic to $H$ in $M$.
\end{lem}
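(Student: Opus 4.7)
My plan begins with a direct Euler-characteristic bookkeeping. Writing $H'$ for the surface obtained from $H$ by removing $N(D) \cap H$ and attaching the frontier of $N(D)$ in $M(H)$, a local case-check gives $\chi(H') = \chi(H) + 2$ when $D$ is a compressing disk (two parallel copies of $D$ cap the removed annular neighborhood of $\partial D$), $\chi(H') = \chi(H) + 1$ when $D$ is an edge-compressing disk with $\beta$ on a boundary edge (two parallel copies of $D$ cap the removed disk $N(\alpha)$), and $\chi(H') = \chi(H)$ when $\beta$ lies on an interior edge (an extra bicollar rectangle along $\beta$ combines with the two copies of $D$ to form a single disk).

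In the honest case, the plan is to show that every component $H_i$ of $H'$ has $-\chi(H_i) < -\chi(H)$, so that the lexicographically dominant entry of $w(H/D)$ is strictly less than $-\chi(H)$. The key step is to exclude sphere and disk components of $H'$. Such a trivial component would necessarily be a parallel copy of $D$ glued to a subdisk of $H$ bounded by $\partial D \cap H$ (together with a subarc of $\partial H$ in the $\partial$-compression case), contradicting the essentiality of $\partial D \cap H$ in $H$ guaranteed by honesty. With every component then satisfying $-\chi(H_i) \geq 0$ and $\sum_i(-\chi(H_i)) \leq -\chi(H) - 1$, each entry is at most $-\chi(H) - 1 < -\chi(H)$. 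Discarding ball components only shortens the width tuple, so $w(H/D) < w(H)$ in the first coordinate.

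In the dishonest case, the plan is to realize $H \mapsto H/D$ as an isotopy through a $3$-ball in $M$, which simultaneously gives the second assertion of the lemma and a strict drop in $|\TT^1 \cap H|$. If $D$ is a dishonest compressing disk, $\partial D$ bounds a subdisk $D_H \subset H$ containing at least one point of $\TT^1 \cap H$ (forced by dishonesty); by irreducibility of $M$, the $2$-sphere $D \cup D_H$ bounds a ball $B$, and isotoping $H$ across $B$ carries $D_H$ onto a parallel copy of $D$, eliminating its punctures. If $D$ is a dishonest edge-compressing disk on an interior edge $e$, a regular neighborhood $N(D) \subset M$ is itself a ball, inside which one isotopes $H$ by pushing a bicollar of $\alpha$ across $D$ past $\beta$, removing the two points $\partial \beta \subset H \cap e$. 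If $D$ is a dishonest edge-compressing disk on a boundary edge with $\alpha$ inessential, $\alpha$ cobounds a subdisk $D_H \subset H$ with an arc $\gamma \subset \partial H$, and $D \cup D_H$ is a disk with boundary $\beta \cup \gamma \subset \partial M$; incompressibility of $\partial M$ supplies a disk on $\partial M$ closing this into a $2$-sphere, which by irreducibility bounds a ball $B$ across which $H$ can be isotoped. In all three subcases the isotopy shows $H/D$ is isotopic to $H$, so $-\chi(H/D) = -\chi(H)$, while $|\TT^1 \cap H/D| < |\TT^1 \cap H|$; hence $w(H/D) < w(H)$ in the second coordinate.

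The main obstacle will be the geometric verification in the three dishonest subcases that the isotopy across the relevant ball reproduces $H/D$ exactly after discarding ball components, and that no new $\TT^1$-intersections are introduced during the push. The boundary-edge subcase in particular demands a careful use of $\partial M$-incompressibility to close $D \cup D_H$ into a sphere, and the interior-edge subcase requires identifying the frontier of $N(D)$ as a single disk in $M$ and carrying out the push in the correct local model.
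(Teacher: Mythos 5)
Your overall strategy (Euler characteristic bookkeeping for honest disks, irreducibility and $\bdy$-incompressibility for dishonest ones) is the same as the paper's, but in the dishonest case your execution has a real gap, which you flag as ``the main obstacle'' without closing it, and it is not a routine verification. For a dishonest compressing disk, or a dishonest edge-compressing disk at a boundary edge, nothing guarantees that the ball $B$ bounded by $D \cup D_H$ (together with a disk in $\bdy M$ in the boundary case) satisfies $B \cap H = D_H$: the ball produced by irreducibility may contain further sheets of $H$ as well as pieces of $\TT^1$. An ambient push of $D_H$ across $B$ then drags those sheets, so the resulting surface need not be $H/D$, and since the push is not relative to $\TT^1$ inside $B$, the dragged sheets can acquire new intersections with the $1$-skeleton; so neither assertion of the lemma follows from your isotopy in these two subcases. (Only the interior-edge subcase is safe, because there the isotopy is supported in a small regular neighborhood $N(D)$, which meets $H$ only near $\alpha$ and meets $\TT^1$ only in a subarc of $e$.) The paper sidesteps this by working on the surgered surface first: the two parallel copies of $D$ miss $\TT^1$, and the component that gets discarded (a sphere, or a disk that is boundary-parallel by incompressibility of $\bdy M$ plus irreducibility) contains at least one point of $H \cap \TT^1$ in the compressing case, while in the edge case the two points $\bdy \beta$ are removed outright; hence $|H/D \cap \TT^1| < |H \cap \TT^1|$ with no isotopy needed. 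The isotopy statement is then obtained separately: the discarded component bounds a ball, and the other component cannot lie inside that ball because $H$ is not contained in a ball, so $H$ pushes across it onto $H/D$. Some version of that component-by-component argument is what your proposal is missing.

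A smaller problem occurs in your honest case: the exclusion of disk components of $H'$ is wrong as stated. Essentiality of $\bdy D$ only rules out that it bounds a disk in $H$; it may still be boundary-parallel, cutting off an annulus, and then one component of $H'$ is a disk (a copy of $D$ capped off by that annulus), not ``a copy of $D$ glued to a subdisk of $H$.'' Your premise that every component has $-\chi \geq 0$ can therefore fail. The conclusion survives, but the essentiality should be applied to the complementary piece rather than to the component in question: if $\bdy D$ (or the arc $D \cap H$) separates $H$ into $H_1'$ and $H_2'$, then $-\chi(H_i/\!\!\;) = -\chi(H_i') - 1$ while $-\chi(H) = -\chi(H_1') - \chi(H_2')$, and essentiality guarantees the \emph{other} piece $H_j'$ is not a disk, so $-\chi(H_j') \geq 0$ and each component of $H/D$ has $-\chi$ strictly less than $-\chi(H)$ — which is exactly how the paper argues, and which handles the boundary-parallel case (where the disk component in any event lies in a ball and is discarded) without further fuss.
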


\begin{proof}
If $D$ is an edge-compressing disk meeting an interior edge of $\TT^1$, then $H/D$ is isotopic in $M$ to $H$, but meets $\TT^1$ two fewer times. 

When $D$ is either a dishonest compressing disk, or a dishonest edge-compressing disk incident to a boundary edge of $\TT^1$, then surgering along $D$ produces a surface with two components. Because $M$ is irreducible and $\bdy M$ is incompressible, one of these components is isotopic in $M$ to $H$, and the other is either a disk or sphere meeting $\TT^1$.   As the disk or sphere lies in a ball, we discard it to obtain $H/D$,  a surface isotopic to $H$ in $M$ but meeting $\TT^1$ in fewer points.

Suppose then that $D$ is honest, so that $D \cap H$ is essential in $H$. If, furthermore, $\bdy D$ does not separate $H$, then $-\chi(H/D)$ is less than $-\chi(H)$, and hence the width is also less. If, on the other hand, $\bdy D$ separates $H$ then $H/D$ is disconnected, and both components have smaller negative Euler characteristic. Hence, again width has decreased.
\end{proof}

\begin{dfn}
A {\it compressing sequence} is a sequence $\{H_i\}$ of (possibly empty) 2-sided, embedded surfaces such that for each $i$ either (i) $H_i$ is isotopic, relative to $\TT^1$, to $H_{i-1}$; (ii) $H_i=H_{i-1}/D$, for some $D \in \plex{\CC \EE(H_{i-1},\TT^1)}$; or (iii) $H_i/D=H_{i-1}$, for some $D \in \plex{\CC \EE(H_{i},\TT^1)}$.
\end{dfn}

We have allowed successive terms to be isotopic as a notational convenience, so terms of maximal complexity are not necessarily isolated.

\begin{dfn}
Let $\{H_i\}$ be a compressing sequence. A  subsequence  $\{H_j,\dots,H_k\}$ is said to be a {\it plateau} if all terms are isotopic relative to $\TT^1$ (and hence have equal width), $w(H_j)>w(H_{j-1})$ and $w(H_k)>w(H_{k+1})$.   Every surface on a plateau is referred to as a {\it thick level}.
\end{dfn}

\begin{dfn}
Let $\{H_i\}$ be a compressing sequence. Then the {\it size} of the entire sequence is the set
\[\{w(H_i) \ | \  H_i \mbox{ is on a plateau}\},\]
where repetitions are included, and the ordering is non-increasing. Two such sets are compared lexicographically.
\end{dfn}

We now aim to build a compressing sequence realizing $H$ as a thick level with disconnected disk complex with respect to $\mathcal T^1$.   The following lemma will be used to first normalize the boundary of $H$.

\begin{lem}
\label{l:EssentialBoundary}
If $H$ is connected and $\plex{\CC(H)}$ is disconnected, then  $\partial H$ consists entirely of curves essential in $\partial M$.
\end{lem}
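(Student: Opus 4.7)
The plan is to argue by contradiction: suppose some component $\gamma$ of $\partial H$ is inessential in $\partial M$. First I would choose $\gamma$ to be innermost, so that $\gamma$ bounds a disk $E \subset \partial M$ with $\mathrm{int}(E) \cap \partial H = \emptyset$. I would observe that $H$ cannot be a disk with $\partial H = \gamma$: otherwise $H \cup E$ would bound a $3$-ball by irreducibility of $M$, placing $H$ inside a ball and contradicting the standing hypothesis that $H$ is not contained in a ball. I would then push $E$ slightly into $M$, bending its boundary along $H$, to obtain a properly embedded disk $D \subset M$ whose boundary $\gamma'$ is a loop in $\mathrm{int}(H)$ parallel to $\gamma$. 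Since $H$ is not a disk, $\gamma'$ is essential on $H$, so $D$ is a compressing disk for $H$; label sides of $H$ so that $D \in \CC_+(H)$.

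Next, because $\plex{\CC(H)}$ is disconnected, Corollary~\ref{c:SI} tells us $H$ is strongly irreducible, so in particular $\CC_-(H)$ is nonempty and I may pick some $D' \in \CC_-(H)$. The key geometric fact I would exploit is that $\gamma' = \partial D$ is boundary-parallel on $H$, cobounding an annulus with $\gamma \subset \partial H$. Consequently $\gamma'$ can be isotoped inside $H$ into an arbitrarily small collar of $\partial H$, and so every properly embedded loop in $\mathrm{int}(H)$ has geometric intersection number zero with $\gamma'$. I would therefore isotope $\partial D'$ within $H$ to be disjoint from $\gamma'$, keeping it essential on $H$ throughout, and then extend this to an ambient isotopy of $M$ supported in a product collar of $H$, producing a disk $D'' \in \CC_-(H)$ isotopic to $D'$ through compressing disks with $\partial D'' \cap \partial D = \emptyset$.

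Finally, because $D \in \CC_+(H)$ and $D'' \in \CC_-(H)$ lie in the closures of opposite complementary regions of $H$ in $M$, one has $D \cap D'' \subset H$, whence $D \cap D'' = \partial D \cap \partial D'' = \emptyset$. This disjoint pair on opposite sides of $H$ produces an edge of $\plex{\CC(H)}$ joining $[D] \in \plex{\CC_+(H)}$ to $[D''] \in \plex{\CC_-(H)}$, contradicting the disconnectedness of $\plex{\CC(H)}$. The one step I expect to require some care is the extension of the surface-level isotopy of $\partial D'$ off $\gamma'$ in $H$ to an ambient isotopy of $D'$ in $M$ through compressing disks on the $-$ side; this is routine via a product collar of $H$, but worth being explicit about so that essentiality of the boundary and the side of $H$ are preserved throughout.
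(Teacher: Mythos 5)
Your strategy is essentially the paper's: pick an innermost inessential component $\gamma$ of $\partial H$, use the disk $E\subset\partial M$ it bounds to produce a compressing disk, and contradict disconnectedness of $\plex{\CC(H)}$ by exhibiting a disjoint pair of compressing disks on opposite sides. Two remarks. First, a small simplification: you do not need irreducibility or the ``not contained in a ball'' hypothesis to rule out $H$ being a disk. Since $\plex{\CC(H)}$ is disconnected it is non-empty, so $H$ carries an essential loop and cannot be a disk; this is the paper's argument, and it sidesteps the point you gloss over in your ball argument (after pushing $H\cup E$ into the interior, one still has to identify which side of the resulting sphere is the ball before concluding that $H$ lies in one). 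Similarly, you do not need Corollary~\ref{c:SI} (which is stated for separating surfaces) to get a disk on the other side: disconnectedness together with Lemma~\ref{l:OneSideConnected} already forces $\CC_-(H)\neq\emptyset$.

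The substantive problem is your final disjointness step: you claim $D\cap D''\subset H$ ``because $D\in\CC_+(H)$ and $D''\in\CC_-(H)$ lie in the closures of opposite complementary regions of $H$.'' That is valid only if $H$ separates $M$. Neither the standing hypotheses nor this lemma assume that: $H$ is merely $2$-sided, ``opposite sides'' refers to the two sides of a product neighborhood, and when $H$ is non-separating the interiors of $D$ and $D''$ lie in the same complementary region and may intersect. Disjoint boundaries then do not yield a disjoint pair, and no edge of $\plex{\CC(H)}$ is produced, so the contradiction is not reached. The paper avoids this by taking the compressing disk to be the innermost subdisk $C\subset\partial M$ itself: any other compressing disk can be isotoped, through compressing disks, to have interior in the interior of $M$ and boundary in the interior of $H$, hence disjoint from $C$, whether or not $H$ separates. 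Your push-in version can be repaired in the same spirit: first arrange $D''$ so that $D''\cap\partial M=\emptyset$ and $\partial D''$ misses $\gamma$, and only then choose the push-in $D$ of $E$ inside a collar of $\partial M$ thin enough to miss the compact set $D''$. As written, however, the disjointness step is a genuine gap in the non-separating case.
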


\begin{proof}
Since $\plex{\CC(H)}$ is disconnected, it must be non-empty. Hence, the surface $H$ must not be a disk. Hence, every component of $\bdy H$ must be essential on $H$. Thus, if some component of $\bdy H$ is inessential on $\bdy M$, then an innermost one (on $\bdy M$) bounds a subdisk $C$ of $\bdy M$ that is a compressing disk for $H$.  Furthermore, as $C$ is in $\partial M$, it is disjoint from any other compressing disk for $H$.  This would imply that $\plex{\CC(H)}$ is connected, a contradiction.
\end{proof}

We now prove that the hypothesis of Proposition~\ref{p1Skeleton}, that $\plex{\CC(H)}$ is disconnected, also allows us to construct a compressing sequence in which all non-terminal surfaces are isotopic to $H$.

\begin{lem}
\label{lCompSequence}
Suppose $\plex{\CC(H)}$ is disconnected. Then there is a compressing sequence $\{H_i\}_{i=0}^{n}, n \geq 2$ satisfying the  following conditions:
\begin{enumerate}
	\item All non-terminal surfaces are isotopic to $H$ in $M$, and the boundaries of such surfaces meet $\TT^1$ minimally. 
	\item The initial element is obtained from the next, $H_0 = H_1/D_+$, by compressing along an honest disk, $D_+ \in \CC \EE(H_1,\TT^1)$.
	\item The final element is obtained from the penultimate one, $H_n = H_{n-1}/D_-$, by compressing along an honest disk, $D_- \in \CC \EE(H_{n-1},\TT^1)$.
	\item $D_+$ and $D_-$, as compressing or $\bdy$-compressing disks for $H$, represent vertices in different components of $\plex{\CC \BB(H)}$. (Recall that honest edge-compressing disks are $\bdy$-compressing disks).
\end{enumerate}
\end{lem}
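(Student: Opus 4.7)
The strategy is to build the sequence with as short an interior as possible, taking honest compressions only at the two endpoints and using dishonest moves (together with isotopies rel $\TT^1$) in between to keep the isotopy class of $H$ in $M$ fixed along the non-terminal part.

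First I would normalize. By Lemma~\ref{l:EssentialBoundary}, every component of $\bdy H$ can be made essential in $\bdy M$; then an ambient isotopy makes $|\bdy H \cap \TT^1|$ minimal in the isotopy class of $H$. Both properties persist under dishonest surgeries and isotopies rel $\TT^1$, so they will hold for every non-terminal surface in the sequence.

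Next, using the hypothesis that $\plex{\CC(H)}$ is disconnected --- together with the implicit hypothesis, matching the setting of Proposition~\ref{p1Skeleton}, that $\plex{\CC \BB(H)}$ is also disconnected (required for condition~(4) to be realizable) --- I would pick $D_+$ on one side of $H$ and $D_-$ on the other, representing distinct components of $\plex{\CC \BB(H)}$. I would then produce surfaces $\widetilde H_\pm$ isotopic to $H$ in $M$ for which $D_\pm$ appears as an honest element of $\CC \EE(\widetilde H_\pm, \TT^1)$ on the appropriate side. This realization step proceeds by induction on $|D_\pm \cap \TT^1|$: each transverse intersection with an interior edge $e$ of $\TT^1$ supplies a small dishonest edge-compressing or compressing disk $\delta$ for the ambient surface whose surgery, by Lemma~\ref{l:widthchange}, is again isotopic to $H$ in $M$ and reduces the intersection count. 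After finitely many steps $D_+$ meets $\TT^1$ either not at all (so $D_+$ is an honest compressing disk) or only in a boundary edge of $\TT^1$ (so $D_+$ is an honest $\bdy$-compressing disk). The same applies to $D_-$.

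Finally I would assemble the sequence. Set $H_1 = \widetilde H_+$ and $H_{n-1} = \widetilde H_-$, and connect them by a finite chain of isotopies rel $\TT^1$ interspersed with dishonest surgeries; such a chain exists by putting an ambient isotopy between $\widetilde H_+$ and $\widetilde H_-$ in general position with respect to $\TT^1$ and recognizing each tangency moment as a dishonest operation. Prepending $H_0 = H_1/D_+$ and appending $H_n = H_{n-1}/D_-$, both by honest compressions, completes the compressing sequence, and conditions (1)--(4) follow directly from the construction. The principal obstacle is the realization step: one must show that every $\CC \BB(H)$-disk can be made honest in $\CC \EE(\cdot,\TT^1)$ by dishonest moves preserving the isotopy class of $H$ in $M$, and, in the case of $\bdy$-compressing disks, that the $\beta$-arc on $\bdy M$ can be isotoped onto a boundary edge of $\TT^1$ --- a step that uses the minimality of $|\bdy H \cap \TT^1|$ to prevent new obstructions from appearing.
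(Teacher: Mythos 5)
Your overall architecture is reasonable (normalize the boundary, realize $D_\pm$ as honest disks for surfaces isotopic to $H$, then string everything into one compressing sequence), and your observation that condition (4) implicitly uses disconnectedness of $\plex{\CC \BB(H)}$ from the setting of Proposition~\ref{p1Skeleton} is fair. But the step you yourself identify as the principal obstacle --- the realization step --- is where the argument breaks, and the mechanism you propose for it does not work. You induct on $|D_\pm \cap \TT^1|$ by claiming that each point of $D_\pm \cap \TT^1$ ``supplies a small dishonest disk for the ambient surface whose surgery reduces the intersection count.'' First, surgering $H$ along any disk leaves both $D_\pm$ and $\TT^1$ untouched, so it cannot change $|D_\pm \cap \TT^1|$ at all. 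Second, a point where the interior of $D_\pm$ crosses an edge is far from $H$, so there is no dishonest compressing or edge-compressing disk for $H$ sitting at that point; dishonest disks are governed by how $H$ itself meets $\TT^1$, not by how $D_\pm$ does. Third, even if one surgered $H$ along some dishonest disk, $D_\pm$ need not remain a compressing disk for the resulting surface, so the induction has nothing to carry forward. Lemma~\ref{l:widthchange} gives none of this.

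The move that actually works runs in the opposite direction, and is what the paper does: leave $D_-$ fixed, choose disjoint arcs in $D_-$ joining the points of $D_- \cap \TT^1$ to $\partial D_-$, and push fingers of $H$ along these arcs (a tube construction). Each such step is the \emph{inverse} of a dishonest compression --- the co-core of the tube is a dishonest disk for the new surface whose surgery recovers the old one --- so these are legitimate type-(iii) steps of a compressing sequence, the width goes \emph{up} rather than down, every intermediate surface is isotopic to $H$ in $M$ with the same boundary, and after absorbing all points of $D_- \cap \TT^1$ the complement of the tubes in $D_-$ is an honest disk whose compression yields $H/D_-$. Doing the same with $D_+$ from the very same position of $H$ and concatenating produces the whole sequence at once, which also eliminates your separate ``connecting chain'' between $\widetilde H_+$ and $\widetilde H_-$; that chain is an extra liability in your version, since an ambient isotopy between two differently fingered surfaces can move $\partial H$ across boundary edges and momentarily violate the minimality of $|\partial H \cap \TT^1|$ demanded by condition (1). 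Finally, note that in any realization of this kind the honest disk appearing at the end is a truncation of $D_\pm$, so one must still check that, after undoing the fingers, its boundary is isotopic to $\partial D_\pm$ on $H$, which is what secures condition (4).
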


\begin{proof}
First we isotope $H$ to minimize the intersection with $\TT^1 \cap \bdy M$. (Note that every loop of $\bdy H$ is essential by Lemma \ref{l:EssentialBoundary}, and hence the resulting loops still meet $\TT^1$.)

By assumption, we have compressing disks $D_+$ and $D_-$ for $H$, such that $[D_+]$ and $[D_-]$ are in different components of $\plex{\CC(H)}$.  We now use the disk $D_-$ to construct a compressing sequence. First, let $\{\alpha_i\}_{i=1}^n$ be a set of pairwise-disjoint arcs in $D_-$ connecting the points of $\TT^1 \cap D_-$ to points in $D_- \cap H$. The sequence $\{H_i\}_{i=0}^n$ is then defined by induction as follows. First, let $H_0=H$. Let $M(H_i)$ denote the manifold obtained from $M$ by cutting open along $H_i$. Let $N(\alpha_i)$ denote a neighborhood of $\alpha_i$ in $M(H_i)$. Construct the surface $H_{i+1}$ by removing $N(\alpha_i) \cap H_i$ from $H_i$ and replacing it with the frontier of $N(\alpha_i)$ in $M(H_i)$. Note that a co-core of $N(\alpha_i)$ is then a dishonest compressing disk for $(H_{i+1},\TT^1)$, and thus the sequence $\{H_i\}$ is a dishonest compressing sequence (successive terms are related by dishonest compression). Note also that since $D_-$ is in the interior of $M$, all surfaces $H_i$ thus constructed have the same boundary. 

By a similar construction, we can use the disk $D_+$ to construct a sequence of surfaces $\{H_i\}_{i=-m}^0$, where $H_0=H$. Putting these sequences together then gives a dishonest compressing sequence. Let $H_{-m-1}=H/D_+$ and $H_{n+1}=H/D_-$. These surfaces are obtained by honest compression, and hence are of lower width than every other surface in the sequence. 
\end{proof}

\begin{dfn}
\label{d:PosNeg}
Suppose $\{H_i\}$ is a compressing sequence satisfying the conclusion of Lemma \ref{lCompSequence}. Let $D$ be an honest element of $\CC \EE(H,\TT^1)$. We will say $D$ is {\it positive} if it belongs to the same component of $\plex{\CC \BB(H)}$ as the honest disk $D_+$ from condition (2) of Lemma \ref{lCompSequence}, and {\it negative} if it belongs to the same component as the disk $D_-$ from condition (3).
\end{dfn}

Given disks $D$ and $E$ such that $[D]$ and $[E]$ span an edge of $\plex{\CC \EE(H,\TT^1)}$, we now define a surface $H/DE$. Roughly speaking, this surface will be obtained by simultaneous surgery along both $D$ and $E$. 

\begin{lem}
\label{ledgeCompression}
Suppose that $\plex{\CC \BB(H)}$ is disconnected and that  $|\partial H \cap \TT^1|$ is minimized.   Then given $D,E \in \CC \EE(H,\TT^1)$ that are either disjoint or form a canceling or coincident pair, there is a surface $H/DE$ such that $\{H/D, H/DE, H/E\}$ is a compressing sequence with  $w(H/DE) < w(H)$.
\end{lem}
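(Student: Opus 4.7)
The plan is to address each of the three pair types from Definition~\ref{dDiskPairs} separately, in each case defining $H/DE$ as an appropriate simultaneous surgery and verifying that $\{H/D, H/DE, H/E\}$ forms a compressing sequence and that $w(H/DE) < w(H)$.

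For the disjoint case, since $D \cap E = \emptyset$, surgery along $D$ leaves $E$ intact, so $E \in \CC \EE(H/D,\TT^1)$; symmetrically $D \in \CC \EE(H/E,\TT^1)$. I set $H/DE := (H/D)/E = (H/E)/D$. Then $H/DE$ is obtained from $H/D$ by compressing along $E$ (case (ii) of the compressing sequence definition), while $(H/E)/D = H/DE$ realizes $H/E$ as an ``uncompression'' of $H/DE$ via $D$ (case (iii)). Two applications of Lemma~\ref{l:widthchange} yield $w(H/DE) < w(H/D) < w(H)$.

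For the canceling case, $D$ and $E$ are edge-compressing disks on opposite sides of $H$ meeting in a single point $p$ on an edge $e$, and their $\beta$-arcs share $p$ as a common endpoint on $H$. Passing to $M(H)$ splits $p$ into two points on opposite copies of $H$, so $D$ and $E$ become disjoint disks in $M(H)$, and I define $H/DE$ as the surface obtained by simultaneous surgery along both in $M(H)$. The minimality of $|\partial H \cap \TT^1|$ together with the assumed incompressibility of $\partial M$ ensure each disk is either dishonest (with $e$ interior) or honest (with $e \subset \partial M$), and in either subcase applying Lemma~\ref{l:widthchange} to each surgery separately gives $w(H/DE) < w(H)$. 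For the compressing sequence condition, after surgery along $D$ the disk $E$ is displaced near the former point $p$; I extend its $\alpha$-arc along the frontier of $N(D)$, which is now part of $H/D$, to produce $E' \in \CC \EE(H/D,\TT^1)$ with $(H/D)/E' = H/DE$, and symmetrically build $D' \in \CC \EE(H/E,\TT^1)$ with $(H/E)/D' = H/DE$.

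For the coincident case, $D$ and $E$ lie on the same side of $H$ and share $\beta_D = \beta_E$ on $e$, so the union $D \cup E$ is a disk bounded by the loop $\alpha_D \cup \alpha_E$ on $H$. I define $H/DE$ via an analogous simultaneous surgery, then produce the corresponding $E' \in \CC \EE(H/D,\TT^1)$ and $D' \in \CC \EE(H/E,\TT^1)$ by parallel modifications. The main obstacle in the two non-disjoint cases is correctly identifying these modified disks---in particular, verifying that the ambient displacement of $H$ occurring in the passage to $H/D$ (respectively $H/E$) leaves intact a disk of the correct type for the compressing sequence; the minimality hypothesis on $|\partial H \cap \TT^1|$ is what prevents pathological configurations in this identification.
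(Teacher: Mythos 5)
Your overall strategy (case analysis on the pair type, with $H/DE$ defined by a simultaneous surgery) is the paper's, but there are genuine gaps, and the most serious one is flagged by the fact that you never use the hypothesis that $\plex{\CC \BB(H)}$ is disconnected. In the canceling case along a \emph{boundary} edge, both disks are honest (given the minimality of $|\partial H \cap \TT^1|$), and they are $\bdy$-compressing disks on opposite sides of $H$ meeting in a single point of $\partial H$; a small isotopy makes them disjoint, so by Lemma~\ref{l:OneSideConnected} the complex $\plex{\CC \BB(H)}$ would be connected. In other words this subcase is \emph{vacuous} under the hypothesis, and that contradiction is precisely where the hypothesis enters. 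Your proposal instead tries to build $H/DE$ in this subcase, with no verification that the modified disks exist or that the sequence conditions hold, and no width estimate for the middle term; asserting that Lemma~\ref{l:widthchange} applies ``to each surgery separately'' bounds $w(H/D)$ and $w(H/E)$ but says nothing about $w(H/DE)$.

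The interior-edge canceling case is also not as you describe. Although $D$ and $E$ become disjoint in $M(H)$, their arcs $\alpha_D,\alpha_E\subset H$ share the cancellation point as a common endpoint, so the two surgeries do not have disjoint supports on $H$, and the ``extend the $\alpha$-arc along the frontier of $N(D)$'' construction does not in general produce a disk $E'\in\CC \EE(H/D,\TT^1)$ with $(H/D)/E'=H/DE$. The correct mechanism here is an isotopy, not a further compression: $H/D$ and $H/E$ are isotopic relative to $\TT^1$, so one may simply set $H/DE=H/D$ and use clause (i) of the compressing-sequence definition; both surfaces have width less than $w(H)$ by Lemma~\ref{l:widthchange}. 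A similar (smaller) oversight occurs in your disjoint case: $E$ need not remain in $\CC \EE(H/D,\TT^1)$, since $\bdy E$ may become inessential in $H/D$ or lie on a component discarded as lying in a ball; then $(H/D)/E$ is undefined and your chain $w(H/DE)<w(H/D)<w(H)$ breaks, though the lemma survives because in those situations $H/DE$ is isotopic to $H/D$ and clause (i) again applies. Finally, in the coincident case the actual content --- that $E'=E-N(D)$ is a compressing disk for $H/D$, essential because $[D]\neq[E]$, so that $H/DE=(H/D)/E'$ and symmetrically $(H/E)/D'$ --- is exactly what your phrase ``parallel modifications'' leaves unproved.
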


\begin{proof}
If $D$ and $E$ are a disjoint pair, then they have disjoint neighborhoods and we let $H/DE$ be the surface obtained by simultaneously surgering along $D$ and $E$ and discarding any components that lie in a ball.  (Compare Definition \ref{dCompression}).   Now either $E \in \CC \EE(H/D,\TT^1)$, its boundary is trivial in $H/D$, or it meets $H$ in a component that was discarded when constructing $H/D$.     In the first case, $H/DE = (H/D)/E$ and in the latter two cases, compressing $H/D$ along $E$ has no effect, hence $H/DE$ is isotopic to $H/D$.   By symmetry, $H/DE$ is either isotopic to $H/E$ or $(H/E)/D$.   It follows that $\{H/D, H/DE, H/E\}$ is a compressing sequence with width $w(H/DE) < w(H)$.

If $D$ and $E$ are a canceling pair, then $D$ and $E$ are a pair of edge-compressing disks on opposite sides of $H$ meeting in a single point along an edge.    If the edge is interior, then both are dishonest and $H/D$ is isotopic to $H/E$, relative to $\TT^1$.   Define $H/DE$ to be $H/D$.    Then $\{H/D, H/DE, H/E\}$ is a compressing sequence of isotopic surfaces, all of equal width less than that of $H$.

If $D$ and $E$ are a canceling pair, and meet along a boundary edge of $\TT^1$, then, as $|\partial H \cap \TT^1|$ was minimized, both are honest and $D,E \in \BB(H)$. Now, $D$ and $E$ meet in a single point on $\partial H$, but after a slight isotopy while they are no longer edge-compressing disks, they are still $\bdy$-compressing disks on opposite sides of $H$.   Hence by Lemma \ref{l:OneSideConnected} $\plex{\CC \BB(H)}$ is connected, a contradiction.

If $D$ and $E$ are a coincident pair meeting an edge of $\TT^1$, then $D$ and $E$ are a pair of edge-compressing disks on the same side of $H$ and are coincident along a sub-arc of an edge.    In this case $D \cup E$ is a disk meeting $\TT^1$ in an arc.   Define $H/DE$ to be the surface obtained by surgering $H$ along $D \cup E$ and discarding any components that lie in a ball (in the case that $D$ and $E$ meet a boundary edge of $\TT^1$ the disk $D \cup E$ is not properly embedded in $M$, but surgering as in Definition~\ref{dCompression} produces the desired surface).  Note that $E'=E-N(D)$ is a compressing disk for $H/D$.  It is not trivial, because $D$ and $E$ represent different vertices.    Thus, $H/DE$ is isotopic to $(H/D)E'$.  By symmetry, $H/DE$ is isotopic to $(H/E)/D'$.   It follows that $\{H/D, H/DE, H/E\}$ is a compressing sequence with width $w(H/DE) < w(H)$.
\end{proof}

We use Lemma \ref{ledgeCompression} to obtain a compressing sequence from any path in the disk complex. 

\begin{dfn}
\label{lpathCS}
Suppose that $\plex{\CC \BB(H)}$ is disconnected and  $|\partial H \cap \TT^1|$ is minimized. Let $p=\{[D_i]\}_{i=0}^n$ be a path in $\plex{\CC \EE(H,\TT^1)}$.  Then we define $H/p$ to be the compressing sequence $$\{H/D_0,H/D_0D_1,H/D_1,H/D_1D_2,H/D_2,\dots, H/D_{n-1}D_n,H/D_n\}$$ such that for each $i$, $H/D_i D_{i+1}$ is the surface given by Lemma \ref{ledgeCompression}.
\end{dfn}

Proposition \ref{p1Skeleton} follows from Lemma \ref{lCompSequence} and the following:

\begin{lem} 
Let $H_i$ be a thick level in a minimal size compressing sequence that satisfies the conditions of Lemma \ref{lCompSequence}.   Then $\plex{\CC \EE(H_i,\TT^1)}$ is disconnected and $H_i$ is isotopic to $H$ in $M$.
\end{lem}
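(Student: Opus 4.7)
The plan is to prove the two conclusions separately, with the main content being the disconnectedness assertion. For the isotopy claim, I would observe that conditions (2) and (3) of Lemma \ref{lCompSequence} give $w(H_0) < w(H_1)$ and $w(H_n) < w(H_{n-1})$. Since the definition of a plateau requires strictly larger width than the neighboring terms on each side, neither $H_0$ nor $H_n$ can lie on a plateau. Hence any thick level $H_i$ is non-terminal, and condition (1) of Lemma \ref{lCompSequence} immediately yields that $H_i$ is isotopic to $H$ in $M$.

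For the disconnectedness claim, I argue by contradiction: assume $\plex{\CC\EE(H_i,\TT^1)}$ is connected and produce a compressing sequence still satisfying Lemma \ref{lCompSequence} but of strictly smaller size. Let $\{H_j,\dots,H_l\}$ denote the plateau containing $H_i$. The surfaces $H_{j-1}$ and $H_{l+1}$ lie just off the plateau with strictly smaller width, and since width between consecutive terms of a compressing sequence can drop only through a compression, there exist disks $D_{prev} \in \CC\EE(H_j,\TT^1)$ and $D_{next} \in \CC\EE(H_l,\TT^1)$ with $H_{j-1} = H_j / D_{prev}$ and $H_{l+1} = H_l / D_{next}$. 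All plateau elements are isotopic rel $\TT^1$, so I would transport these disks along those isotopies to obtain $D^-, D^+ \in \CC\EE(H_i,\TT^1)$, with $H_i/D^-$ isotopic rel $\TT^1$ to $H_{j-1}$ and $H_i/D^+$ isotopic rel $\TT^1$ to $H_{l+1}$. A path in the assumed connected disk complex from $[D^-]$ to $[D^+]$, combined with Definition \ref{lpathCS}, then yields a compressing sequence $H_i/p$ whose interior surfaces all have width strictly less than $w(H_i)$ by Lemma \ref{ledgeCompression}. I splice this into the original sequence by removing $H_j, \dots, H_l$ and inserting the surfaces of $H_i/p$ in their place, adjoining them to $H_{j-1}$ and $H_{l+1}$ via the rel-$\TT^1$ isotopies as case-(i) steps of the compressing-sequence definition. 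The plateau of width $w(H_i)$ is thereby deleted from the replaced section, while every newly inserted surface has width strictly less than $w(H_i)$, so no plateau of width $\ge w(H_i)$ is introduced. The size therefore drops lexicographically, contradicting minimality.

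The main obstacle I anticipate is verifying that the spliced sequence still satisfies conditions (1)--(4) of Lemma \ref{lCompSequence}. Condition (1) is the most delicate: every non-terminal surface must be isotopic to $H$ in $M$. Surgery along a dishonest disk preserves the $M$-isotopy class by Lemma \ref{l:widthchange}, so condition (1) is automatic as long as the interior vertices of $p$ are represented by dishonest disks. When the plateau meets the boundary of the sequence (so that $D_{prev} = D_+$ or $D_{next} = D_-$ is honest), I would take $p$ to begin or end at that existing honest disk, preserving conditions (2), (3), and (4) verbatim with the original $D_+, D_-$. If every path in $\plex{\CC\EE(H_i,\TT^1)}$ from $[D^-]$ to $[D^+]$ must traverse an interior honest vertex, I would instead use that honest disk as a replacement endpoint for $D_+$ or $D_-$ and build a still shorter compressing sequence; ruling out this alternative cleanly is the technical heart of the argument.
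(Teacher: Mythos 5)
Your overall strategy matches the paper's: argue the isotopy claim from conditions (1)--(3), then, assuming $\plex{\CC\EE(H_i,\TT^1)}$ is connected, take a path $p$ between the two disks producing the neighboring lower-width terms, splice in the sequence $H_i/p$ of Definition~\ref{lpathCS}, and derive a contradiction with minimality of size. However, there is a genuine gap exactly where you flag it: you do not handle the case in which every path from $[D^-]$ to $[D^+]$ passes through honest vertices (or through pairs $H_i/DE$ in which one or both disks, or the union $D\cup E$ of a coincident pair, is honest). Your fallback --- ``use that honest disk as a replacement endpoint for $D_+$ or $D_-$'' --- does not obviously produce a sequence satisfying the conditions of Lemma~\ref{lCompSequence}: condition (4) requires the two terminal honest disks to lie in \emph{different} components of $\plex{\CC\BB(H)}$, and an honest disk encountered in the interior of $p$ may well lie in the same component as the terminal you intend to keep, in which case truncating there destroys condition (4) rather than preserving it. Since conditions (1)--(4) are precisely what make the minimality hypothesis bite, leaving this unresolved leaves the contradiction unproven.

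The paper closes this gap with a labeling argument built on Definition~\ref{d:PosNeg}: every term of the new sequence that is not isotopic to $H$ is labeled ``$+$'' or ``$-$'' according to whether the honest disk(s) producing it (or the disk $D\cup E$ of a coincident pair, which is isotopic to an element of $\CC\BB(H)$ when $H_i/DE$ is not isotopic to $H_i$) are positive or negative; when both disks of a pair are honest they span an edge of the complex, hence can be made disjoint and so lie in the same component of $\plex{\CC\BB(H)}$, making the label well defined. The key observation is that no ``$+$'' term is adjacent to a ``$-$'' term --- this holds in the original sequence by condition (1) and in the spliced portion by the construction of Definition~\ref{lpathCS}. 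Since the full sequence begins with a ``$+$'' term and ends with a ``$-$'' term, the shortest subsequence whose terminals carry opposite labels has all intermediate terms unlabeled, i.e.\ isotopic to $H$, and its terminals automatically satisfy conditions (2)--(4); by Lemma~\ref{l:widthchange} it has strictly smaller size, giving the contradiction. Supplying this (or an equivalent) argument is what your proposal still needs; also note the paper first reduces to a plateau consisting of a single surface, which avoids your transport of disks along rel-$\TT^1$ isotopies, though that step of yours is harmless.
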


\begin{proof}
Since the widths of the initial and final terms are less than any intermediate term, by condition (1) in Lemma~\ref{lCompSequence} $H_i$ is isotopic to $H$ in $M$.   Since the size of the compressing sequence can be reduced by identifying successive isotopic surfaces on a plateau, we may assume that $H_i$ is the only surface on the plateau.

Let $D_i$ and $D_{i+1}$ be the disks for which $H_{i-1} = H_i/D_i$ and $H_{i+1}=H_i/D_{i+1}$.    By way of contradiction, suppose that $\plex{\CC \EE(H_i,\TT^1)}$ is connected, let $p$ be a path from $[D_i]$ to $[D_{i+1}]$, and $H_i/p$ the compressing sequence from $H_{i-1}$ to $H_{i+1}$ given by Definition \ref{lpathCS}.  Produce a new compressing sequence by replacing  the subsequence $\{H_{i-1},H_i,H_{i+1}\}$ in our original sequence with the compressing sequence $H_i/p$.   The new sequence is a compressing sequence with lower complexity than the original, because the thick level $H_i$ was eliminated and replaced with terms all with strictly lower complexity by Lemma \ref{l:widthchange}.  We will need to pass to a subsequence to satisfy conditions (1)--(4) of Lemma \ref{lCompSequence}.

By construction, each term of the new sequence either 
	\begin{itemize}
		\item is isotopic in $M$ to $H$,
		\item has the form $H_i/D$ or $H_i/DE$, where $H_i$ is isotopic to $H$ in $M$, and one or both of $D$ or $E$ is honest, or 
		\item has the form $H_i/DE$, where $D$ and $E$ are a coincident pair of edge-compressing disks. 
	\end{itemize}

We now label the terms in our sequence that are not isotopic to $H$. If such a term has the form $H_i/D$, where $D$ is honest, then label it with ``$+$" or ``$-$" according to whether $D$ is positive or negative. (Recall Definition \ref{d:PosNeg}.) When the term has the form $H_i/DE$ then there are two cases. If only one of $D$ or $E$ is honest, then as before assign the label ``$+$" or ``$-$" according to whether the disk is positive or negative. If both $D$ and $E$ are honest then note that even when they are both incident to $\bdy M$, they can be isotoped to be disjoint as elements of $\plex{\CC \BB(H)}$. Thus, they are in the same component of $\plex{\CC \BB(H)}$ and are hence either both positive, or both negative. Assign the label ``$+$" or ``$-$"  accordingly. Finally, when the term has the form $H_i/DE$, where $D$ and $E$ are a coincident pair of edge-compressing disks, then the disk $D \cup E$ is (isotopic to) an element of $\CC \BB(H)$. (Otherwise $H_i/DE$ would be isotopic to $H_i$.) Hence, we may assign a label based on whether or not the disk $D \cup E$ is positive or negative.

Note that no term labeled $+$ can be adjacent to a term labeled $-$.   Condition (1) of Lemma \ref{lCompSequence} guarantees this for the original sequence and the construction given by Definition \ref{lpathCS} guarantees it for the substituted sequence $H_i/p$, because successive terms are labeled with a pair of compressing disks that are joined by an edge, hence are in the same component when both are honest.

Note that the new sequence starts with a $+$ term and ends with a $-$ term.   Pass to the shortest subsequence with terminals of opposite labels.   Note that neither terminal is of the form $H_i/DE$ where $D$ and $E$ are both positive or both negative, because terminating instead at either $H_i/D$ or $H_i/E$ would provide a shorter subsequence. Thus, all intermediate terms are isotopic to $H$ in $M$.   This subsequence satisfies the conditions of Lemma \ref{lCompSequence} but has lower size, a contradiction.
\end{proof}

\section{The 2-skeleton: $\plex{\CC \EE(H,\TT^1)}$ disconnected implies $\plex{\CC \EE(H,\TT^2)}$ is disconnected. }

\begin{pro}
\label{p:CE1toCE2}
If $\plex{\CC \EE(H,\TT^1)}$ is disconnected, then $H$ may be isotoped, relative to $\TT^1$,  so that $\plex{\CC \EE(H,\TT^2)}$ is disconnected.
\end{pro}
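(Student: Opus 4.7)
The plan closely mirrors the proof of Proposition~\ref{p1Skeleton}, with $\TT^2$ replacing $\TT^1$ at appropriate points and isotopies of $H$ restricted to rel $\TT^1$. I would append $|H\cap\TT^2|$ as a new coordinate of width (so that $w_2(H)=(-\chi(H),|H\cap\TT^1|,|H\cap\TT^2|)$), classify a disk in $\CC\EE(H,\TT^1)$ as \emph{honest} when it also lies in $\CC\EE(H,\TT^2)$ (its interior is disjoint from the open $2$-faces) and \emph{dishonest} otherwise, and establish analogues of Lemmas~\ref{l:widthchange}, \ref{lCompSequence}, and \ref{ledgeCompression} in this refined setting.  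In particular, the analogue of Lemma~\ref{l:widthchange} must record that surgery along any disk drops $w_2$, and that dishonest surgery preserves the rel-$\TT^1$ isotopy class of $H$ (a dishonest disk is, after outer simplification, trivial on $H$, so the compression is realized by an isotopy rel $\TT^1$ that reduces the final coordinate).

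With this setup, the endgame from Section~3 transfers directly: a minimal-size compressing sequence satisfying the analogue of Lemma~\ref{lCompSequence} realizes $H$ as a thick level $H_i$ (isotopic to $H$ rel $\TT^1$), and at $H_i$ the complex $\plex{\CC\EE(H_i,\TT^2)}$ must be disconnected, since otherwise a path $p$ between the two terminal disks inside $\plex{\CC\EE(H_i,\TT^2)}$ would yield a substituted subsequence $H_i/p$ (via the analogue of Definition~\ref{lpathCS}) of strictly smaller size, contradicting minimality.  The reason the smaller complex suffices here is that the natural inclusion $\plex{\CC\EE(H_i,\TT^2)}\hookrightarrow\plex{\CC\EE(H_i,\TT^1)}$ sends paths to paths (edges in both complexes arise from the same three pair types in Definition~\ref{dDiskPairs}), so connectedness of the smaller complex between the terminals would upgrade to connectedness in the larger one.

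The main obstacle is establishing the refined Lemma~\ref{ledgeCompression}: given a disjoint, canceling, or coincident pair $(D,E)$ in $\CC\EE(H,\TT^1)$, one must produce a surface $H/DE$ fitting into a compressing sequence $\{H/D, H/DE, H/E\}$ of strictly smaller $w_2$.  When $D$ or $E$ is dishonest, this requires simplifying the intersections with $\TT^2$ by a standard innermost-loop / outermost-arc argument on $D\cap\TT^2$.  An innermost loop on $D$ bounds a subdisk $D_0 \subset D$ lying in a single tetrahedron; together with the disk cut off in the corresponding $2$-face it cobounds a ball by irreducibility of $M$, and a disk swap reduces $|D\cap\TT^2|$ without changing the component.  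The outermost-arc case is the delicate one: the arc might have endpoints on $\bdy D \cap H$, on $\bdy D \cap \bdy M$, or on $\bdy D \cap \TT^1$, and a case analysis mirroring Lemma~\ref{ledgeCompression} (together with Lemma~\ref{l:OneSideConnected} to rule out the problematic canceling configuration on a boundary edge) is required to confirm that at each stage the simpler disk produced is again of type $\CC$ or $\EE$ on the same side of $H$, or else that $H$ can be isotoped rel $\TT^1$ to reduce $w_2$, contradicting minimality.
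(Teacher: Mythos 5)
Your plan has a genuine gap at its foundation: the honest/dishonest dichotomy from Section~3 does not transfer to the passage from $\TT^1$ to $\TT^2$. In Section~3 a dishonest disk is one in $\CC\EE(H,\TT^1)$ whose boundary is \emph{inessential on $H$ itself} (or an interior edge-compression), and it is only because of this absolute triviality, together with irreducibility of $M$ and incompressibility of $\partial M$, that surgery along it is an isotopy of $H$. A disk in $\CC\EE(H,\TT^1)$ whose interior meets the open $2$-faces can perfectly well have boundary essential on $H$; surgering along it genuinely compresses $H$ and changes its isotopy class (indeed drops $-\chi$), so your claim that ``dishonest'' surgery (in your redefined sense) is realized by an isotopy rel $\TT^1$ reducing $|H\cap\TT^2|$ is false. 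Without that, the thick level of your minimal-size sequence need not be isotopic to $H$ at all, let alone rel $\TT^1$, and the whole transplanted machinery collapses. A second, related gap: the analogue of Lemma~\ref{lCompSequence} cannot be built by the arc-tubing trick, since $D_\pm\cap\TT^2$ consists of arcs and circles rather than isolated points, and arranging terminal compressions along disks that avoid the open faces (so that your substitution argument can use paths in $\plex{\CC\EE(H_i,\TT^2)}$) is essentially the content of the proposition you are trying to prove.

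The paper takes a different route precisely to avoid this: it introduces \emph{shadow disks}, defines an isotopy $f:H\times I\to M$ rel $\TT^1$ interpolating between a position where $D_+$ has been shrunk off $\TT^2$ (or into a neighborhood of an edge) and the analogous position for $D_-$, and labels the intervals of transversality with ``$+$'' or ``$-$'' according to which component of $\plex{\CC\EE(H,\TT^1)}$ contains a disk admitting a shadow meeting $\TT^2$ in at most a subarc of an edge. An analysis of saddle tangencies shows adjacent intervals cannot carry opposite single labels, so some $H_t$ meets every face in normal arcs and inessential loops; after discarding the loops, an innermost/outermost surgery argument (your last paragraph is in the spirit of this step) makes $D_\pm$ disjoint from the open faces while staying in their respective components, so they become vertices of the subcomplex $\plex{\CC\EE(H,\TT^2)}$, which is therefore disconnected. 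If you want to salvage a width-based argument you would need a substitute for the statement that ``cleaning up intersections with $\TT^2$ is an isotopy rel $\TT^1$,'' and that is exactly what the sweep-out and shadow-disk labeling provide.
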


The remainder of the section is a sequence of lemmas proving this proposition. To this end, we define:

\begin{dfn}
A disk $D$ is a {\it shadow disk} for $(H,\TT^1)$ if 
	\begin{enumerate}
		\item either $\bdy D$ is an essential loop on $H-N(\TT^1)$, or $\bdy D =\alpha \cup \beta$, where $\alpha$ is an arc on $H$ and $\beta \subset e$, for some edge $e$ of $\TT^1$, and
		\item the interior of $D$ meets $H - N(\TT^1)$ in a (possibly empty) collection of loops that are inessential in $H - N(\TT^1)$.
	\end{enumerate}
\end{dfn}

Note that if the interior of a shadow disk $D$ is disjoint from $H$, then it is either a compressing or an edge-compressing disk for $(H,\TT^1)$.

\begin{lem}
\label{d:ShadowImpliesCompressing}
Let $\delta$ be the boundary of a shadow disk.  Then $\delta$ is also the boundary of a compressing disk or edge-compressing disk for $(H,\TT^1)$.
\end{lem}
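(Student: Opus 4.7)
The plan is to take a shadow disk $D$ with $\partial D = \delta$ whose interior has the minimum possible number of loops of intersection with $H$, and show that this minimum must be zero. Once it is, condition (1) in the definition of a shadow disk tells us directly that $D$ is a compressing disk (when $\delta$ is an essential loop in $H-N(\TT^1)$) or an edge-compressing disk (when $\delta=\alpha\cup\beta$ with $\beta\subset e$), which is what we want.

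To carry out the minimization, I would argue by contradiction: assuming the minimum is positive, I would select an intersection loop $\ell\subset D\cap H$ that is innermost on $H$ in the following sense. Each intersection loop is inessential in $H-N(\TT^1)$ by condition (2), so bounds some subdisk of $H-N(\TT^1)$; among all pairs $(\ell,E)$ with $E\subset H-N(\TT^1)$ a disk and $\partial E=\ell$, choose one that minimizes $|D\cap E|$. Any loop of $D\cap H$ lying strictly inside $E$ would itself bound a strictly smaller such disk, contradicting minimality, so $D\cap E=\ell$. Let $D_0\subset D$ be the subdisk of $D$ bounded by $\ell$. Since $D_0\subset D$ and $D\cap E=\ell$, we have $D_0\cap E=\ell$, so $D_0\cup E$ is an embedded $2$-sphere; by irreducibility of $M$, it bounds a $3$-ball $B$.

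Next I would use $B$ to produce a new shadow disk $D'$ with fewer intersections. The piece $D-D_0$ meets $D_0\cup E$ only along $\ell$, so its interior lies entirely in $B$ or entirely in $M-B$. Form $D'$ by replacing $D_0$ in $D$ with a parallel push-off $E'$ of $E$, chosen on the side of $H$ opposite the interior of $D-D_0$ (this is the key choice that makes $D'$ embedded). Since $E\subset H-N(\TT^1)$, a thin product collar of $E$ in $M$ misses $N(\TT^1)$, so $E'$ and hence $D'$ have the same relationship to $\TT^1$ as $D$. The disk $D'$ has boundary $\delta$ (we never touched $\partial D$), is embedded, and has strictly fewer interior intersection loops with $H-N(\TT^1)$ than $D$ (we have eliminated $\ell$, and $E'$ is disjoint from $H$). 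The remaining intersection loops were inessential in $H-N(\TT^1)$ for $D$, so they remain so for $D'$; hence $D'$ is a shadow disk, contradicting the minimality of $D$.

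The main obstacle is the verification that the cut-and-paste $D'$ is genuinely a shadow disk with fewer intersections, rather than one that has merely shuffled intersections around. This is precisely what the innermost-on-$H$ (as opposed to innermost-on-$D$) choice of $\ell$ accomplishes: it guarantees $E$ is free of other intersection loops, so that pushing $E$ off $H$ neither introduces new crossings with $H$ nor forces $E'$ to meet $D-D_0$, and the surgery removes exactly the single loop $\ell$.
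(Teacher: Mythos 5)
Your proposal is correct and is essentially the paper's own argument: take a shadow disk with $\partial D=\delta$ minimizing the number of intersection loops with $H$, pick a loop innermost on $H-N(\TT^1)$ (your minimal pair $(\ell,E)$ is just a careful way of arranging $D\cap E=\ell$), and surger/isotope $D$ across the disk $E\subset H$ to reduce the count, a contradiction. The detour through the sphere $D_0\cup E$ and irreducibility of $M$ is harmless but unnecessary; the side-of-$H$ push-off you describe already yields the embedded shadow disk with fewer intersections.
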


\begin{proof}
Among shadow disks $D$ with $\bdy D = \delta$ choose one that meets $H$ in the minimal number of loops.    If  there is at least one loop, let $\delta'$ be one that is innermost on $H - N(\TT^1)$.  By surgering $D$ along the disk in $H$ bounded by $\delta'$ we produce a disk with boundary $\delta$ but fewer intersections, a contradiction.
\end{proof}

\begin{dfn}
Suppose $D$ is a compressing or edge-compressing disk, and $D'$ is a shadow disk with the same boundary. Then we say $D'$ {\it is a shadow of} $D$.
\end{dfn}

We now proceed with the proof of Proposition \ref{p:CE1toCE2}. Choose compressing or edge-compressing disks $D_+,D_-$ in different components of  $\plex{\CC \EE(H,\TT^1)}$. Define an isotopy  $f: H \times I \to M$, relative to $\TT^1$, so that

      \begin{enumerate}
               \item $f(H,1/2)=H$.
               \item If $D_+$ is a compressing disk, then $f(H,0)$ is obtained from $H$ by shrinking $D_+$ until it is disjoint from $\TT^2$. If $D_+$ is an edge-compressing disk, then $f(0)$ is obtained from $H$ by shrinking $D_+$ into a neighborhood of an edge $e$ of $\TT^1$ (keeping $D_+ \cap \TT^1$ fixed), and then possibly twisting it around $e$ so that it meets $\TT^2$ in a subarc of $e$.
               \item The surface $f(H,1)$ is obtained from $H$ in the same manner, depending on whether $D_-$ is a compressing disk or an edge-compressing disk.
       \end{enumerate}
We may assume the isotopy $f$ is in general position with respect to $\TT^2-\TT^1$. Let $H_t=f(H,t)$.  Let $\{t_i\}$ denote the set of values so that $H_t$ is not transverse to $\TT^2-\TT^1$. 

Label the intervals $(t_i, t_{i+1})$ as follows.  Choose $t \in (t_i, t_{i+1})$. If there is a compressing or edge-compressing disk $D \in \CC \EE(H_t,\TT^1) =\CC \EE(H,\TT^1)$ for $H_t$ in the same component of $\plex{\CC \EE(H,\TT^1)}$ as $D_+$ ($D_-$), with a shadow $D'$ meeting $\TT^2$ in at most a sub arc of an edge, then label the interval ``$+$" (``$-$," resp.). By construction, the first interval is labeled ``$+$" and the last is labeled ``$-$". It follows that either

\begin{enumerate}
       \item There is a ``$+$" interval adjacent to a ``$-$" interval.
       \item There is an interval with both labels.
       \item There is an unlabeled interval.
\end{enumerate}

\begin{lem}
There cannot be an interval with just the label ``$+$" adjacent to an interval with just the label ``$-$".
\end{lem}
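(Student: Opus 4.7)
The strategy is proof by contradiction. Suppose for some $i$ the interval $(t_{i-1},t_i)$ carries only the label ``$+$'' while the adjacent interval $(t_i,t_{i+1})$ carries only ``$-$''. Pick sample times $s_1 \in (t_{i-1},t_i)$ and $s_2 \in (t_i,t_{i+1})$, together with a disk $D \in \CC \EE(H,\TT^1)$ in the component of $\plex{\CC \EE(H,\TT^1)}$ containing $D_+$, admitting a shadow $D'$ for $H_{s_1}$ whose intersection with $\TT^2$ lies in a subarc of an edge. The plan is to modify $D'$ into a shadow for $H_{s_2}$ of a disk still in the same component meeting $\TT^2$ in at most a subarc of an edge, which would assign the ``$+$'' label to $(t_i,t_{i+1})$ and yield the contradiction.

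At time $t_i$, the surface $H_{t_i}$ has a generic (center or saddle) tangency with some $2$-simplex $\sigma$ at an interior point $p$, so $p \notin \TT^1$. Choose a small open ball $U$ about $p$ that is disjoint from $\TT^1$, contained in the union of the (at most two) tetrahedra adjacent to $\sigma$, and small enough that $H_{s_1}$ and $H_{s_2}$ coincide on $M \setminus U$. Because $D' \cap \TT^2$ lies in a subarc of an edge, the interior of $D'$ lies in the interior of a single tetrahedron $\Delta$. If $\sigma$ is not a face of $\Delta$, then $\Delta \cap U = \emptyset$ and $D'$ is automatically a shadow for $H_{s_2}$ meeting $\TT^2$ in the same way, giving the contradiction.

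If $\sigma$ is a face of $\Delta$, the next step is to isotope $D'$ through equivalent shadows so that $\partial D' \cap H_{s_1}$ becomes disjoint from $U$; this is feasible since $U \cap H_{s_1}$ is a small disk-like region and $\partial D' \cap H_{s_1}$ is $1$-dimensional, and all the moves can be performed in the complement of $\TT^1$. After this adjustment, $\partial D' \subset H_{s_2} \cup \TT^1$ because $H_{s_1}$ and $H_{s_2}$ agree outside $U$. The interior of $D'$ remains in the interior of $\Delta$, and its intersections with $H$ change only inside $U$. Any new closed curves of $D' \cap H_{s_2}$ located in $U$ bound subdisks of $H_{s_2} \cap U$, and since $U \cap \TT^1 = \emptyset$ they are inessential in $H_{s_2} - N(\TT^1)$; pre-existing loops of $D' \cap H_{s_1}$ that crossed $U$ get rerouted by the tangency, but the rerouting stays in a ball disjoint from $\TT^1$, so the resulting loops remain inessential in $H_{s_2} - N(\TT^1)$. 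Hence $D'$ is a shadow for $H_{s_2}$ of a disk equivalent to $D$, with $D' \cap \TT^2$ still a subarc of an edge.

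I expect the principal obstacle to be the detailed local analysis of how the tangency modifies $D' \cap H$ inside $U$, particularly in the saddle case where arcs of $D' \cap H_{s_1}$ entering and exiting $U$ can be reconnected nontrivially. The clean point making the argument work is that $U \cap \TT^1 = \emptyset$, so the subdisks witnessing inessentiality of loops can be chosen to lie in $U$ and remain disjoint from $\TT^1$ under the modification; formalizing this requires careful bookkeeping of the intersection pattern on both sides of $\partial U$.
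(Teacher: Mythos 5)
There is a genuine gap at the heart of your argument, in the case where the tangency face $\sigma$ is a face of the tetrahedron $\Delta$ containing $D'$. You assert that $D'$ can be isotoped ``through equivalent shadows'' until $\partial D' \cap H_{s_1}$ misses $U$, justifying this by a dimension count ($\partial D'$ is $1$-dimensional, $H_{s_1}\cap U$ is a small disk). General position does not give this: you are trying to push a curve off a codimension-zero region of the surface it lives on, which requires an actual isotopy, and that isotopy is obstructed. The constraint that $D'$ meet $\TT^2$ in at most a subarc of an edge forces $\partial D'$ to stay off $H_{s_1}\cap(\TT^2-\TT^1)$, and the arcs of $H_{s_1}\cap\sigma$ run right through $U$ --- indeed they are the two arcs about to be merged by the saddle. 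An arc of $\partial D'$ lying in the region of $H_{s_1}\cap U$ between those two arcs cannot be pushed out of $U$ without crossing $H_{s_1}\cap\sigma$, and crossing it forces any disk bounded by the new curve to pick up intersections with the face, destroying the labeling condition. This is precisely the situation in which the label can genuinely change, so no amount of ``bookkeeping inside $U$'' will rescue the transfer.

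Note also that your argument, if valid, would prove something much stronger than the lemma: that the ``$+$'' label survives \emph{every} tangency, so every interval would carry the label of the first one. That is false in general (labels can be lost at essential saddles, which is why the paper must allow unlabeled intervals and intervals with both labels in its trichotomy), and its falsity is exactly the trapped-boundary phenomenon above. The paper's proof takes a different route: it does not transfer the given shadow disk across the tangency at all. Instead it analyzes the singular curve $\sigma$ and its resolutions $\sigma_+$, $\sigma_-$ in the face, and shows that if the labeling were to change then some resolution component must be a non-normal arc or a loop essential on $H_\pm$; such a component bounds a subdisk of the face which can be pushed off $\TT^2$ to a \emph{new} shadow disk whose boundary lies away from a neighborhood of $\sigma$, hence lies on both $H_+$ and $H_-$. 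By Lemma \ref{d:ShadowImpliesCompressing} this produces a compressing or edge-compressing disk labeling both adjacent intervals the same way, which is the contradiction. Your outline is missing this mechanism for producing a disk whose shadow avoids the tangency region, and without it the adjacent-interval case cannot be closed.
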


\begin{proof}
First note that passing through a center tangency between $H_t$ and $\TT^2$ (away from $\TT^1$) does not change labeling, as it merely creates or destroys a trivial loop which does not have an effect on shadow disks. So with no loss of generality suppose that a saddle tangency separates the two intervals.  Let $\sigma$ denote the singular curve in the face $\Delta$ of $\TT^2$ at the time of tangency. Then $\sigma$ is homeomorphic to either an ``$\times$," an ``$\alpha$" or an ``$\infty$."   Furthermore, in the case of an ``$\times$," two of the branches of the singular curve must meet the same edge of $\TT^1$. Let $H_+$ and $H_-$ denote the surface $H_t$, for $t$ in the intervals labeled ``$+$" and ``$-$", respectively, and denote the corresponding resolutions of $\sigma$ by $\sigma_+ \subset H_+ \cap \Delta$ and $\sigma_- \subset H_- \cap \Delta$.   Away from $N(\sigma)$ the intersection pattern of $H_+$ and $H_-$  with $\TT^2$ agree.

There are now several cases to consider for $\sigma$ (see Figure \ref{f:SigmaCurves}):
	\begin{enumerate}
		\item Either $\sigma_+$ or $\sigma _-$ contains a non-normal arc, say $\sigma_+$. Note that if $\sigma$ is an $\times$, then we are necessarily in this case.
		\item $\sigma$ is an $\infty$ curve. Then both $\sigma_+$ and $\sigma_-$ consists of loops. If all three of these loops are inessential on $H_+$, then the labeling could not have changed, as shadow disks would not have been affected. Hence, we may assume at least one of the loops, say a loop of $\sigma_+$, is essential on $H_+$.
		\item $\sigma$ is an $\alpha$ curve, and the arc components of $\sigma_+ \cup \sigma_-$ are normal. In this case there is a loop component  of $\sigma_+ \cup \sigma_-$, say in $\sigma_+$. If this loop is inessential on $H_+$, then again the labeling could not have changed. Hence again we conclude there is a loop of $\sigma_+$ that is essential on $H_+$.
	\end{enumerate}

\begin{figure}
\psfrag{1}{(1)}
\psfrag{2}{(2)}
\psfrag{3}{(3)}
\psfrag{+}{$\sigma_+$}
\psfrag{-}{$\sigma_-$}
\[\includegraphics[width=4.5in]{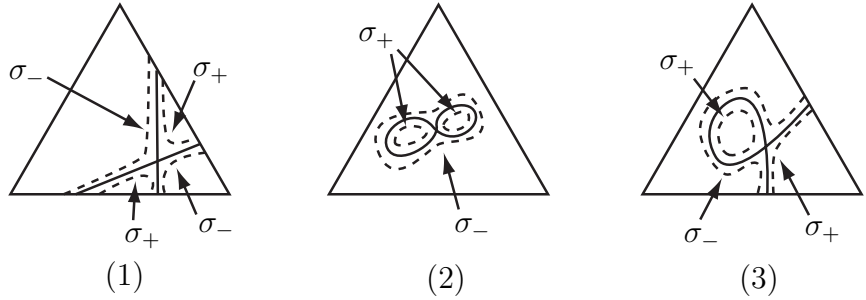}\]
\caption{The three cases for $\sigma$.}
\label{f:SigmaCurves}
\end{figure}

In all three cases a component of $\sigma_+$  bounds a subdisk  in a face that we can push off of $\TT^2$ to a shadow disk $D'$. By Lemma \ref{d:ShadowImpliesCompressing}, $D'$ is the shadow of a compressing or edge-compressing disk $D$. By assumption, $D$ must therefore be in the same component of $\plex{\CC \EE(H,\TT^1)}$ as $D_+$.

Note that $\bdy D=\bdy D'$ lies outside of a neighborhood of $\sigma$ on $H_+$. Therefore, $\bdy D'$ is also a curve on $H_-$ that meets $\TT^2$ in at most its endpoints. We conclude $D'$ is a shadow disk for $H_-$, and thus the interval containing $H_-$ is also labelled ``$+$," a contradiction.
\end{proof}

We are left with the possibility that there is either an unlabeled interval, or an interval with both labels. Let $t$ be in such an interval.

\begin{lem}
$H_t$ meets $\TT^2$ in a collection of normal arcs, and loops that are inessential on $H_t-N(\TT^1)$.
\end{lem}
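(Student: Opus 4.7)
The plan is to argue by contradiction. Suppose $H_t \cap \TT^2$ contains either a non-normal arc $\alpha$ in some face $\Delta$ of $\TT^2$, or a loop $\ell$ that is essential on $H_t - N(\TT^1)$. I will build a shadow disk from such a bad intersection, then apply Lemma~\ref{d:ShadowImpliesCompressing} to extract a compressing or edge-compressing disk that forces the interval containing $t$ to be labeled, contradicting the standing hypothesis.

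In the arc case, the non-normal arc $\alpha$ has both endpoints on a single edge $e$ of $\bdy\Delta$ and therefore cuts off a subdisk $R \subset \Delta$ with $\bdy R = \alpha \cup \beta$, where $\beta$ is a subarc of $e$. In the loop case, $\ell$ bounds a subdisk $R \subset \Delta$ directly. By choosing $\alpha$ (respectively $\ell$) innermost in $\Delta$---so that $R$ contains no further non-normal arcs and no essential loops of $H_t \cap \Delta$ in its interior---we can arrange that all intersections of $H_t$ with the interior of $R$ are inessential loops on $H_t - N(\TT^1)$. Then $R$ is a shadow disk for $(H_t, \TT^1)$.

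Next, push $R$ slightly off $\Delta$ into one of the adjacent tetrahedra $\tau$. Since $H$ meets $\Delta$ transversely along $H \cap \Delta$, it has a local product structure near $\Delta$, so $\bdy R$ can be slid along $H$ (for the $\alpha$-part) and along $e$ (for the $\beta$-part) to a disk boundary lying in $H \cup \TT^1$ inside $\tau$. The resulting disk $\tilde R$ has interior in the interior of $\tau$, and $\tilde R \cap \TT^2$ is either empty (loop case) or a subarc of $e$ (arc case). Moreover $\tilde R$ inherits the shadow property from $R$, so by Lemma~\ref{d:ShadowImpliesCompressing}, $\tilde R$ is a shadow of some compressing or edge-compressing disk $D \in \CC\EE(H_t, \TT^1) = \CC\EE(H, \TT^1)$.

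Finally, the corollary following Lemma~\ref{l:OneSideConnected} together with the disconnectedness hypothesis of Proposition~\ref{p:CE1toCE2} implies that $\plex{\CC\EE(H,\TT^1)}$ has exactly two components, one containing $D_+$ and the other $D_-$. Hence $D$ lies in one of them, and its shadow $\tilde R$ meets $\TT^2$ in at most a subarc of an edge, so the interval containing $t$ carries the corresponding $+$ or $-$ label, contradicting its being unlabeled. For the both-labeled case, Lemma~\ref{l:OneSideConnected} identifies the two components with the two sides of $H$, so the witnessing shadow disks sit essentially in tetrahedra on opposite sides of $H$ and, up to perturbation, can be made disjoint---placing them in the same component, a contradiction. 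The main obstacle I expect is verifying that the push-off $\tilde R$ genuinely remains a shadow disk with $\bdy\tilde R \subset H \cup \TT^1$ and with interior intersections with $H$ still restricted to inessential loops; this rests on the transverse product structure of $H$ near $\Delta$ and on the innermost choice of $R$.
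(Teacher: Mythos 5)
Your construction of the shadow disk from an innermost essential loop or outermost non-normal arc, the small push-off so that it meets $\TT^2$ in at most a subarc of an edge, and the resulting contradiction when the interval is unlabeled all match the paper's argument. The genuine gap is in the both-labels case. Your argument there never uses the disk $D$ you just produced from the bad intersection: you assert that the two shadow disks witnessing the labels ``$+$'' and ``$-$'' lie ``on opposite sides of $H$'' and can be perturbed to be disjoint, hence force the two components of $\plex{\CC\EE(H,\TT^1)}$ to merge. That claim fails. A shadow disk meeting $\TT^2$ in at most a subarc of an edge does lie essentially in a single tetrahedron, but it does not lie on one side of $H$ (its interior may cross $H$ in inessential loops), and the two witnessing shadows may lie in the \emph{same} tetrahedron and intersect essentially --- this is exactly the expected configuration at an almost normal octagon, where edge-compressing disks on opposite sides have such shadows and cannot be made disjoint. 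If your perturbation claim were correct it would prove that no interval can carry both labels at all, which is false and would undercut the rest of the proof of Proposition~\ref{p:CE1toCE2}. Moreover, even granting disjoint shadows, you would still need to explain why that makes the honest disks span an edge of $\plex{\CC\EE(H,\TT^1)}$; that transfer requires the observation that the two honest disks lie on opposite sides of $H$, so that disjointness of their boundaries away from $\TT^1$ already yields disjointness of the disks away from $N(\TT^1)$.

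The paper's both-labels argument is asymmetric and does use the bad intersection: the shadow $D'$ it produces is a subdisk of a face, hence isotopic into $\TT^2$. Let $D$ be the honest disk with $\bdy D=\bdy D'$, say in the component of $D_+$; since the interval also carries the label ``$-$'', there is a disk $E$ in the component of $D_-$ with a shadow $E'$ meeting $\TT^2$ in at most a subarc of an edge. Because $D$ and $E$ lie on opposite sides of $H$ and in different components, $D\cap H_t$ must meet $E\cap H_t$ away from $\TT^1$. But $D'$ lies in a face while $E'$ avoids $\TT^2$ except possibly along a subarc of an edge, so $\bdy D'=\bdy D$ and $\bdy E'=\bdy E$ can be isotoped to be disjoint away from $\TT^1$, a contradiction. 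It is precisely the asymmetry (one shadow inside $\TT^2$, the other essentially off of $\TT^2$) that makes the disjointness automatic; your symmetric version of the claim does not hold, so this case of your proof needs to be replaced by the argument above.
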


\begin{proof}
If the lemma is false then $H_t$ meets some face in a non-normal arc, or a loop that is essential on $H_t-N(\TT^1)$.  An innermost such loop/outermost such arc bounds a subdisk of a face that is a shadow disk $D'$. A small isotopy of  $D'$ makes it meet $\TT^2$ in at most a subarc of an edge, and hence the interval containing $t$ cannot be unlabeled.

By Lemma \ref{d:ShadowImpliesCompressing}, $D'$ is a shadow of a compressing or edge-compressing disk $D$.  Without loss of generality, we assume $D$ is in the same component of $\plex{\CC \EE(H,\TT^1)}$ as $D_+$. Since the interval containing $t$ has both labels, there must be a compressing or edge-compressing disk $E$ for $H_t$, which has a shadow $E'$ meeting $\TT^2$ in at most a subarc of an edge, where $E \in \plex{\CC \EE(H,\TT^1)}$. Thus, $D \cap H_t$ meets $E \cap H_t$ away from $\TT^1$. However, as $D'$ is isotopic into $\TT^2$, it can be isotoped to miss $E'$ away from $\TT^1$, contradicting the fact that $\bdy D'=\bdy D$ and $\bdy E'=\bdy E$.
\end{proof}

\begin{lem}
\label{l:each_face_normal_arcs}
There is an isotopy of $H$, relative to $\TT^1$, so that $H$ meets each face of $\TT^2$ only in normal arcs.
\end{lem}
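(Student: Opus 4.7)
The plan is to eliminate the remaining loop components of $H \cap (\TT^2 \setminus \TT^1)$ one at a time by an isotopy of $H$ rel $\TT^1$, inducting on the number of such loops. By the preceding lemma we may assume $H$ meets $\TT^2$ in a collection of normal arcs together with finitely many loops, each inessential on $H - N(\TT^1)$. The base case (no loops) is exactly the desired conclusion, so it suffices to show how to reduce the loop count by one.

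For the inductive step, choose a loop $\ell \subset H \cap F$, for some face $F$ of $\TT^2$, that is innermost in $F$: so $\ell$ bounds a subdisk $D' \subset F$ whose interior is disjoint from $H$. Since $\ell$ is inessential on $H - N(\TT^1)$, it also bounds a subdisk $E \subset H - N(\TT^1)$. By the irreducibility of $M$, the $2$-sphere $D' \cup E$ bounds a $3$-ball $B \subset M$. Because $\mathrm{int}(D')$ lies in $\mathrm{int}(F)$ and $E$ avoids $\TT^1$, we have $\partial B \cap \TT^1 = \emptyset$; and since every edge of $\TT^1$ is contained in $\TT^2$ while the only part of $\partial B$ lying in $\TT^2$ is $D'$ (which is itself disjoint from $\TT^1$), no edge of $\TT^1$ can sit inside $\mathrm{int}(B)$. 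Hence $B \cap \TT^1 = \emptyset$.

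The key observation is that in fact $H \cap B = E$. Indeed, the innermost choice of $\ell$ gives $D' \cap H = \ell$, so $H \cap \partial B = E$; since $H$ is connected and, by hypothesis, not contained in a ball, any component of $H \cap B$ whose boundary lies in $E$ must be $E$ itself (otherwise the complement $H \setminus \mathrm{int}(E)$ would be trapped in $B$, forcing $H \subset B$). Thus we may push $E$ across $B$ to a parallel copy of $D'$ on the far side of $F$, via an isotopy supported in a small neighborhood of $B$, hence rel $\TT^1$. The new position of $H$ agrees with the old outside this neighborhood, meets $\TT^2 \setminus \TT^1$ in strictly fewer loop components, and introduces no new intersections, closing the induction. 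The main obstacle, handled here by the innermost choice together with the connectedness and non-triviality of $H$, is ensuring that the ball $B$ is disjoint both from $\TT^1$ and from the rest of $H$, so that the surgery really is realized by an ambient isotopy fixing the $1$-skeleton.
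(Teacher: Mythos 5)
Your strategy is the same as the paper's: the paper also takes a loop of $H\cap\TT^2$ that is innermost in a face, uses that it bounds a disk in the face and a disk in $H-N(\TT^1)$, and removes it, repeating until only normal arcs remain; you are simply supplying the detail (irreducibility ball, push across) behind the paper's assertion that the surgered surface is isotopic to $H$ rel $\TT^1$. Your verification that $H\cap B=E$ is correct and is exactly the right point to check: the innermost choice gives $H\cap\bdy B=E$, so $H-E$ is connected and disjoint from $\bdy B$, and it cannot lie in $B$ since $H$ is not contained in a ball.

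The one genuine gap is your justification that $B\cap\TT^1=\emptyset$. From $\bdy B\cap\TT^1=\emptyset$ you conclude that no edge can sit inside $\mathrm{int}(B)$ ``since every edge of $\TT^1$ is contained in $\TT^2$ while the only part of $\bdy B$ lying in $\TT^2$ is $D'$.'' This is a non-sequitur: an edge lying entirely in the open ball $\mathrm{int}(B)$ meets $\bdy B$ nowhere, so no property of $\bdy B\cap\TT^2$ constrains it (and in any case $E$ may meet $\TT^2$ in loops, so $\bdy B\cap\TT^2$ is generally larger than $D'$); a priori the whole $1$-skeleton could be swallowed by $B$. The claim is true and repairable, but by a different argument: since $\TT^1$ is connected and disjoint from $\bdy B$, either $\TT^1\cap B=\emptyset$ or $\TT^1\subset\mathrm{int}(B)$, and the latter is excluded by exhibiting a single point of $\TT^1$ outside $B$. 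If $\bdy M\neq\emptyset$, any vertex on $\bdy M$ works, since $B\cap\bdy M=\emptyset$ (note the face $F$ must be interior: a loop of $H$ in a boundary face would be a component of $\bdy H$ inessential in $\bdy M$, contradicting Lemma~\ref{l:EssentialBoundary}). In general, any point of $H\cap\TT^1$ works, because it lies on $H-E\subset M-B$ (as $E\subset H-N(\TT^1)$); and $H\cap\TT^1\neq\emptyset$ is the only case in which the lemma has content, since an isotopy rel $\TT^1$ of a surface disjoint from $\TT^1$ ending with only normal arcs would force $H\cap\TT^2=\emptyset$ and hence put $H$ inside a tetrahedron, contrary to the standing assumption that $H$ is not contained in a ball. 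With that repair, your push across $B$ is indeed rel $\TT^1$, it does not cross the rest of $H$, and the induction closes.
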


\begin{proof}
It follows from the prior two lemmas that $H$ can be isotoped relative to $\TT^1$ to a surface $H_t$ that  meets each face in a collection of normal arcs, and loops that are inessential in $H_t-N(\TT^1)$. Let $\delta$ be an inessential loop of intersection that bounds a disk $D$ that is innermost in a face.   Then by surgering $H$ along $D$ we produce a surface isotopic to $H_t$ relative to $\TT^1$ that has fewer inessential intersections. Proceeding in this way we may remove all inessential loops, and are thus left with a surface that meets $\TT^2$ in only normal arcs. 
\end{proof}

\begin{lem}
\label{l:isotopy_CET^2_disconnected}
There is an isotopy of $H$ relative to $\TT^1$ so that $\plex{\CC \EE(H,\TT^2)}$ is disconnected.
\end{lem}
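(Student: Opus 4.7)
The plan is to apply Lemma \ref{l:each_face_normal_arcs} to isotope $H$ relative to $\TT^1$ into normal-arc position with respect to $\TT^2$, and then to use the interval/labeling analysis already built up in the section, together with Lemma \ref{d:ShadowImpliesCompressing}, to produce compressing or edge-compressing disks for $(H,\TT^2)$ representing both components of $\plex{\CC\EE(H,\TT^1)}$.

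I would first invoke Lemma \ref{l:each_face_normal_arcs} to isotope $H$ relative to $\TT^1$ so that $H$ meets every face of $\TT^2$ only in normal arcs. Since the isotopy is relative to $\TT^1$, the complex $\plex{\CC\EE(H,\TT^1)}$ is canonically preserved and remains disconnected, with $D_+$ and $D_-$ representing vertices in distinct components. Next, I would observe that the set-theoretic inclusion $\CC\EE(H,\TT^2) \hookrightarrow \CC\EE(H,\TT^1)$ descends to a well-defined graph map $\phi \colon \plex{\CC\EE(H,\TT^2)} \to \plex{\CC\EE(H,\TT^1)}$: any compressing or edge-compressing disk for $(H,\TT^2)$ is one for $(H,\TT^1)$, and the three edge-types of Definition \ref{dDiskPairs} only involve intersections with $\TT^1$. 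As $\phi$ preserves adjacency, it sends components into components, so it suffices to exhibit disks $D^*_+, D^*_- \in \CC\EE(H,\TT^2)$ with $\phi([D^*_\pm]) = [D_\pm]$.

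To construct $D^*_\pm$, I would return to the chosen time $t$ from the labeling analysis. In the case where the interval bears both $+$ and $-$ labels (which is the case that must ultimately obtain, since an unlabeled interval would force $\CC\EE(H,\TT^2)$ to miss both the $+$ and $-$ components, obliging a re-choice of $t$ across adjacent tangencies until a both-labeled interval is reached), the labels directly furnish shadow disks $D'_+$ and $D'_-$ meeting $\TT^2$ in at most a subarc of an edge, each associated via Lemma \ref{d:ShadowImpliesCompressing} to a compressing or edge-compressing disk in the corresponding component of $\plex{\CC\EE(H,\TT^1)}$. I would then iterate the innermost-loop surgery of Lemma \ref{d:ShadowImpliesCompressing} on $D'_+$ and $D'_-$: the inessential loops $\gamma = D'\cap H$ lie in the interior of the shadow, which is disjoint from $\TT^2$ (the subarc of an edge on $\partial D'$ lies on the boundary only), so $\gamma$ itself is disjoint from the normal arcs $H \cap \TT^2$ and hence sits inside a single tetrahedral piece of $H - N(\TT^2)$. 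The innermost disk on $H - N(\TT^1)$ bounded by $\gamma$ can then be chosen inside that piece, hence disjoint from $\TT^2$, so the surgery preserves the $\TT^2$-intersection condition. The resulting disks $D^*_+, D^*_-$ lie in $\CC\EE(H,\TT^2)$ with boundaries equal to $\partial D'_+$ and $\partial D'_-$, placing them in the desired components of $\plex{\CC\EE(H,\TT^1)}$.

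The main obstacle is exactly this compatibility step: verifying that the innermost disk on $H - N(\TT^1)$ bounded by $\gamma$ can be taken inside the single tetrahedral piece of $H - N(\TT^2)$ containing $\gamma$, rather than extending through adjacent pieces via normal arcs and spoiling the $\TT^2$-intersection after the surgery. This requires combining the normal-arc reduction from Lemma \ref{l:each_face_normal_arcs} with a minimality choice for the shadow disks to ensure the relevant inessential loops can be capped off locally.
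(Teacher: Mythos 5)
Your reduction to finding disks of $\CC\EE(H,\TT^2)$ representing both components of $\plex{\CC\EE(H,\TT^1)}$ is the right target, and your observation that $\plex{\CC\EE(H,\TT^2)}$ maps to $\plex{\CC\EE(H,\TT^1)}$ preserving adjacency matches the paper's remark that the former is a subcomplex of the latter. But there is a genuine gap in how you produce those disks: you assume the interval containing $t$ carries both labels, and you dismiss the unlabeled case by saying it would ``oblige a re-choice of $t$ until a both-labeled interval is reached.'' Nothing guarantees such an interval exists. The trichotomy established earlier in the section is exactly: adjacent $+$/$-$ intervals (ruled out), an interval with both labels, \emph{or} an unlabeled interval, and the last case is a live possibility --- every intermediate interval could be unlabeled, meaning that for $H_t$ \emph{no} disk in either component has a shadow meeting $\TT^2$ in at most a subarc of an edge. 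Your construction has no input in that case, and the appeal to ``re-choosing $t$'' is circular: the existence of suitably positioned disks is precisely what is to be proved. The paper's proof does not use the labels at this stage at all. After Lemma~\ref{l:each_face_normal_arcs} puts $H$ in normal-arc position, it takes arbitrary representatives $D_+$, $D_-$ of the two components of $\plex{\CC\EE(H,\TT^1)}$, chosen to minimize the number of curves in which they meet the faces, and shows a minimal representative is disjoint from the faces away from $\TT^1$: a face $\Delta$ meeting $D_+$ is cut by $H\cup D_+$ into planar pieces, one of which is a disk $C$ of one of three boundary types, and surgering $D_+$ along $C$ yields a disk disjoint from $D_+$ (hence in the same component) meeting $\TT^2$ fewer times, a contradiction. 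That argument works uniformly, with no case distinction on labels.

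A secondary issue: even in the both-labels case, the shadow disks furnished by the labels are shadows for the surface $H_t$, whereas the surface to which the conclusion must apply is the further isotoped surface of Lemma~\ref{l:each_face_normal_arcs} (the loops of $H_t\cap\TT^2$ inessential in $H_t-N(\TT^1)$ have been removed by surgeries/isotopies across faces). You never transport the label data through that isotopy, and your innermost-loop argument genuinely needs normal-arc-only position: if $\Sigma\subset H_t$ (the capping subdisk) contained loops of $H_t\cap\TT^2$, its push-off would still cross the faces and the surgered disk would fail to lie in $\CC\EE(H,\TT^2)$. Ironically, the ``compatibility step'' you flag as the main obstacle is the part that does work once $H$ meets the faces only in normal arcs (a normal arc is properly embedded in $H-N(\TT^1)$ with endpoints near $\TT^1$, so it can neither cross the $\TT^2$-disjoint loop $\gamma$ nor lie inside the capping disk); the missing idea is rather a mechanism --- such as the paper's minimal-position surgery on $D_\pm$ --- that produces $\TT^2$-avoiding representatives of both components without presupposing that the labels are present.
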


\begin{proof}
By the previous lemma, $H$ can be isotoped to meet each face in a collection of normal arcs.    By assumption there are disks $D_+$ and $D_-$ representing vertices in different components of $\plex{\CC \EE(H,\TT^1)}$. If there are such disks that also represent vertices of $\plex{\CC \EE(H,\TT^2)}$ then we are done, as this complex is a subcomplex of $\plex{\CC \EE(H,\TT^1)}$.

After isotopy, we may assume that in a neighborhood of $\TT^1$, both $D_+$ and $D_-$ are disjoint from $\TT^2 -\TT^1$. Thus, $D_+$ and $D_-$ meet $\TT^2$ in a collection of loops, arcs, and subarcs of $\TT^1$.  We assume $D_+$ and $D_-$ were chosen so that they meet the faces of $\TT^2$ in a minimal number of curves.  The result thus follows if we show that away from $\TT^1$ they are in fact disjoint from the faces.

Let $\Delta$ be a face of $\TT^2$ that meets $D_+$. Then $H \cup D_+$ cuts up $\Delta$ into a collection of planar surfaces. At least one of these planar surfaces is a disk $C$ such that either

\begin{itemize}
	\item  $\bdy C \subset D_+$.
	\item $\bdy C =\alpha \cup \beta$, where $\alpha \subset H$ and $\beta \subset D_+$.
	\item $\bdy C=\alpha' \cup \alpha'' \cup \beta \cup \gamma$, where $\alpha', \alpha'' \subset H$, $\beta \subset D_+$, and $\gamma \subset \TT^1$. 
\end{itemize}

In all three cases we can surger $D_+$ along $C$ to produce a compressing or edge-compressing disk $D'_+$ that meets $\TT^2$ fewer times. As $D_+'$ is disjoint from $D_+$, it will be in the same component of $\plex{\CC \EE(H,\TT^1)}$, contradicting our assumption that $|D_+ \cap \TT^2|$ was minimal among all such disks.
\end{proof}

Proposition~\ref{p:CE1toCE2} now follows immediately from Lemma~\ref{l:isotopy_CET^2_disconnected}.

\section{The 3-skeleton: $\plex{\CC \EE(H,\TT^2)}$  disconnected implies $H$ is almost normal}

Recall that a surface in a triangulated 3-manifold is {\it normal} if it meets each tetrahedron in a collection of triangles and quadrilaterals and {\it almost normal} if it also has a component in a single tetrahedron that is an octagon or an unknotted annulus of total boundary length 8 or less (see Figure \ref{fig:almost_normal_in_a_3-simplex2}).

\begin{figure}[h]
   \centering
   \includegraphics[width=.9\textwidth]{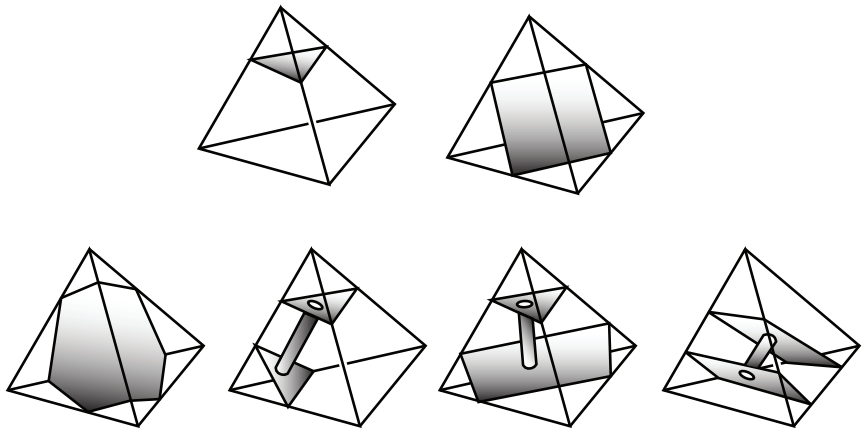}
   \caption{A normal surface meets each tetrahedron in a union of triangles and quadrilaterals, as indicated in the top two pictures. An almost normal surface has one additional component in a single tetrahedron that is either an octagon or an unknotted annulus formed from two normal disks tubed to each other, as in the bottom four pictures.}
   \label{fig:almost_normal_in_a_3-simplex2}
\end{figure}

In this section we add the final step in the proof of Theorem~\ref{t:main1} with the following:

\begin{pro}
\label{p:normalOrAlmostNormal}
The complex $\plex{\CC \EE(H,\TT^2)}$ is disconnected if and only if $H$ is almost normal.
\end{pro}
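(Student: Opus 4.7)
The plan is to analyze $\plex{\CC\EE(H,\TT^2)}$ locally, one tetrahedron at a time. Since $M - N(\TT^2)$ consists of open tetrahedra, every compressing disk in $\CC\EE(H,\TT^2)$ lies in a single tetrahedron, as does the interior of every edge-compressing disk. Having isotoped $H$ so that it meets each face of $\TT^2$ in normal arcs (cf.\ Lemma~\ref{l:each_face_normal_arcs}), one observes that two equivalence classes of disks with representatives in different tetrahedra are always adjacent in the disk complex: their representatives can be chosen to be disjoint, or else coincident along any shared edge of $\TT^1$. It follows that $\plex{\CC\EE(H,\TT^2)}$ is disconnected precisely when a single tetrahedron $T_0$ has disconnected local disk complex $\plex{\CC\EE(H \cap T_0, \TT^2)}$ and every other tetrahedron has empty local disk complex.

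This reduces the proposition to two local classifications: (a) $\plex{\CC\EE(H \cap T, \TT^2)}$ is empty if and only if $H \cap T$ is a disjoint union of normal triangles and quadrilaterals; and (b) $\plex{\CC\EE(H \cap T, \TT^2)}$ is disconnected if and only if $H \cap T$ is a disjoint union of normal disks together with exactly one almost normal piece (an octagon or unknotted annulus of total boundary length at most $8$). In (a) the forward direction is clear, and the converse uses that any non-normal piece of $H \cap T$ admits a compressing or edge-compressing disk by innermost-disk and outermost-arc arguments within the ball $T$.

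The reverse direction of (b) is an explicit verification: the two saddle resolutions of an octagon, and respectively the meridional and longitudinal compressions of a tubed annulus, are compressing disks on opposite sides of $H$ that intersect transversally and cannot be made disjoint, canceling, or coincident. By Lemma~\ref{l:OneSideConnected} each side of $H$ contributes at most one local component, so there are exactly two. The main obstacle is the forward direction of (b): one must show that any more complicated configuration of $H \cap T$ yields a connected local complex. This requires a case analysis --- two or more non-normal pieces in the same tetrahedron allow compressions on opposite sides to be paired across different pieces into disjoint representatives, reconnecting the local complex; a single piece more intricate than an octagon or tubed annulus (for instance, a polygonal disk with more than eight boundary normal arcs, or a higher-complexity planar piece) admits compressions on both sides that can be made disjoint by outermost arguments within $T$. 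Combining these local facts with the preliminary global observation yields both implications of Proposition~\ref{p:normalOrAlmostNormal}.
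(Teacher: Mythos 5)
Your overall architecture is exactly the paper's: reduce to a single tetrahedron with disconnected local complex and all others empty (Lemma~\ref{l:disconnected}), characterize empty local complexes as unions of triangles and quads (Lemma~\ref{l:empty_in_tetrahedron}), and then classify the exceptional tetrahedron (Lemma~\ref{l:almostNormal}), with the backward direction a direct check. The problem is that the forward direction of your step (b) --- which you yourself flag as ``the main obstacle'' --- is precisely where all of the paper's work lies, and your two-clause case analysis does not supply it. Saying that a single piece more intricate than an octagon or tubed annulus ``admits compressions on both sides that can be made disjoint by outermost arguments within $T$'' is an assertion, not an argument: for a disk piece of boundary length $4k$, $k\ge 3$, you need the specific combinatorial fact (Lemma~\ref{l:diskTypes}(3), via Lemma~\ref{l:tetCurve}) that some sub-edge in one hemisphere misses a sub-edge in the other, producing a disjoint opposite-side pair, while for $k=2$ the endpoints of the two forced sub-edges alternate and no such pair exists --- that dichotomy is exactly why the exceptional disk is an octagon and not something longer. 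For a non-disk piece, nothing like an ``outermost argument'' alone rules out a knotted tube, an annulus with a length-$\ge 8$ boundary curve, a planar piece with three or more boundary curves, or positive genus. The paper handles this with a genuinely different set of constructions: the innermost boundary circles $b_1,\dots,b_n$ bound disks $B_i$ pushed off $\partial\Delta$, which are disjoint from every opposite-side compressing disk, so disconnectedness forces an opposite-side \emph{edge}-compressing disk $E$ meeting every $b_i$, whence $n\le 2$; the case $n=1$ gives an immediate contradiction; banding $B_1$ to $B_2$ along $E$ produces an opposite-side disk disjoint from $E$, which must therefore be trivial, forcing $A$ to be a single unknotted annulus; and the length bound on $\partial A$ comes from the parity/intersection argument with the arcs $e_1\cup e_2$ isotopic to a core of $A$. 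None of these steps appear in your sketch, and they are not recoverable from generic innermost/outermost reasoning.

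Two smaller points. First, your claim that two or more non-normal pieces always ``reconnect'' the complex tacitly assumes you can find disks on opposite sides attached to different pieces and then make such a pair span an edge; this needs (and can be given) a cleanup argument as in Lemma~\ref{l:OneSideConnected}, but as stated it is incomplete, and in the paper this situation is handled inside the structural claims of Lemma~\ref{l:almostNormal} rather than as a separate case. Second, in your backward direction the opposite-side disks for an octagon are \emph{edge}-compressing disks, not compressing disks, and disconnectedness requires checking that \emph{every} opposite-side pair fails to be disjoint, canceling, or coincident --- not just the one pair you exhibit; for the octagon this follows because every edge-compressing disk on a given side must run over the unique sub-edge in its hemisphere. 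These are fixable, but the missing proof of the local classification in the exceptional tetrahedron is a genuine gap.
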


Assume that $\plex{\CC \EE(H,\TT^2)}$ is disconnected. Recall that $H$ is assumed to have no component contained in a ball. Thus, in a tetrahedron $\Delta$, every component of $H \cap \Delta$ has boundary. Moreover, by Lemma~\ref{l:each_face_normal_arcs}, $H$ meets $\partial \Delta$ in normal curves.

The combinatorics of normal curves in the boundary of a tetrahedron are well understood. Picturing a normal curve $c$ as the equator of the sphere $\bdy \Delta$, the following lemma and figure articulate the possibilities for each hemisphere of $\partial \Delta$ cut along $c$:

\begin{lem}
\label{l:tetCurve}
Let $c \subset \Delta$ be a connected normal curve pictured as the equator of $\bdy \Delta$ and  let $S_1$ and $S_2$ denote the hemispheres of $\Delta - c$. Then exactly one of the following holds:

\begin{enumerate}
\item $c$ has length 3 and one of $S_1$ or $S_2$ meets $\bdy \Delta$ as in Figure \ref{f:tetCurve}(a).   In particular $c$ is the link of a vertex of $\Delta$.
\item $c$ has length 4 and both $S_1$ and $S_2$ meet $\bdy \Delta$ as in Figure \ref{f:tetCurve}(b).  In particular, $c$ separates a pair of edges.
\item $c$ has length $4k, k \geq 2$, and both $S_1$ and  $S_2$ meet $\bdy \Delta$ as in Figure \ref{f:tetCurve}(c).  In particular, $S_i$ contains 2 vertices, three sub-edges meeting each vertex, and $2k-3$ parallel sub-edges separating the 2 vertices.
\end{enumerate}
\end{lem}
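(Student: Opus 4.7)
The proof is a combinatorial case analysis based on how the four vertices of $\Delta$ are distributed between the two hemispheres. Let $v_i = |V(\Delta) \cap S_i|$, so $v_1 + v_2 = 4$; by symmetry I may assume $v_1 \leq v_2$, giving $v_1 \in \{0,1,2\}$.

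The first step is to rule out $v_1 = 0$ by an Euler characteristic count. In this case every arc of the 1-skeleton that meets $S_1$ is a sub-edge with both endpoints on $c$. Writing $n = |c|$ and letting $s$ denote the number of such sub-edges, each complementary region of these arcs in $S_1$ is a vertex-free subdisk of some face $F$ of $\Delta$, whose boundary alternates between normal arcs of $c$ and sub-edges; since a normal arc and a sub-edge share at most one endpoint, each such region has at least two of each. Applying $V - E + F = 1$ to the disk $S_1$ gives $n - (n + s) + F = 1$, so $F = s+1$, while summing the normal arcs on the region boundaries yields $n \geq 2F = 2s+2 = n+2$, a contradiction. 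Hence $v_1 \geq 1$.

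For $v_1 = 1$, let $v$ be the unique vertex in $S_1$. A parallel Euler-characteristic bookkeeping, now with one interior trivalent vertex, forces the only sub-edges of $S_1$ to be the three initial segments at $v$; these partition $S_1$ into three sectors lying in the three faces incident to $v$, each bounded by two sub-edges and a single normal arc of $c$ separating $v$ from the two other vertices of that face. This yields $|c| = 3$ and recovers case (1). For $v_1 = 2$, let $v, w \in S_1$. The sub-edges incident to $v$ or $w$ account for three initial segments at each vertex, possibly sharing the edge $vw$ if it lies entirely in $S_1$. Any remaining sub-edge has both endpoints on $c$. Using the Euler count together with the fact that normal arcs in each face of $\Delta$ come in only three parallel classes, I would show that these remaining sub-edges must form a single parallel family of $p$ arcs separating $v$ from $w$ in $S_1$. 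Counting endpoints of sub-edges on $c$ then gives $|c| = 4 + 2p$ when $vw \subset S_1$ and $|c| = 6 + 2p$ when $vw$ crosses $c$; the first with $p = 0$ is case (2), while the second (where connectedness of $c$ forces $p = 2k-3$ for some $k \geq 2$) is case (3).

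The main obstacle is the case $v_1 = 2$: to verify that the interior sub-edges really do form a single parallel family separating $v$ from $w$, rather than multiple families or a more tangled configuration. This requires using, in addition to the Euler characteristic of $S_1$, both the classification of normal arc types in a triangular face and the constraint that $c$ is a single simple closed curve matching up across the edges of $\Delta$; the parity restriction $p = 2k - 3$ (odd) for $k \geq 2$ comes from counting the mod-$2$ behavior of $c$ on the edge $vw$ and its symmetric role on the opposite hemisphere.
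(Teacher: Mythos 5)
The paper does not actually prove this lemma---it is quoted as a standard fact of normal surface theory with a citation to Thompson---so you are supplying a proof from scratch; your strategy (case analysis on how the four vertices of $\Delta$ are split by $c$, plus counting in each hemisphere) is a reasonable one, and your exclusion of the case $v_1=0$ is correct and complete. However, the two substantive cases have genuine gaps. In the case $v_1=1$, the claim that ``Euler-characteristic bookkeeping forces the only sub-edges of $S_1$ to be the three initial segments at $v$'' is not justified and cannot follow from the counts you invoke: with one interior vertex $v$, the endpoint count gives $|c| = 3 + 2s'$ where $s'$ is the number of extra sub-edges with both endpoints on $c$, while the region count gives only $|c| \geq 3\cdot 1 + 2s'$ (three sectors at $v$ with at least one normal arc each, and every other region with at least two). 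These balance identically for every value of $s'$---the extra regions would simply be squares with two normal arcs and two sub-edges---so no contradiction appears. Ruling out $s' > 0$ here genuinely requires more input: the corner structure of normal arcs within each face and the connectivity of $c$ (or an analysis of the opposite hemisphere), not the Euler characteristic of $S_1$ alone.

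The case $v_1 = 2$ is where the real content of the lemma lies, and it is exactly the part you leave as ``I would show that\dots''. Two specific points are missing. First, the assertion that the interior sub-edges form a single parallel family separating $v$ from $w$ must be proved, and it is not a one-line consequence of your counts: in the actual length-$4k$ curves the separating family is made up of middle sub-edges of several different edges of $\Delta$ (the edge joining the two vertices of the opposite hemisphere together with middle pieces of the four ``crossing'' edges), so one must genuinely rule out other configurations, including the case $vw \subset S_1$ with $p \geq 1$, which your formula $|c| = 4+2p$ would otherwise permit and which would produce nonexistent connected curves of length $6, 8, \dots$ of the wrong shape. Second, the parity claim $p = 2k-3$ (equivalently, that the length is divisible by $4$ rather than merely even) is asserted via ``mod-2 behavior on the edge $vw$'' but never derived; excluding lengths $\equiv 2 \pmod 4$ (e.g.\ a hypothetical connected curve of length $6$ or $10$) again requires tracking how the two hemispheres fit together along $c$, not just a count in $S_1$. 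As it stands, then, the proposal establishes only that each hemisphere contains at least one vertex; the classification into cases (1)--(3), which is the statement of Lemma~\ref{l:tetCurve}, is not yet proved.
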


\begin{figure}[h]
               \psfrag{3}{$3$}
               \psfrag{4}{$4$}
               \psfrag{k}{$4k, k \geq 2$}
               \psfrag{a}{(a)}
               \psfrag{b}{(b)}
               \psfrag{d}{(c)}

               \psfrag{c}{$c$}
               \psfrag{n}{\tiny{($2k-3$ sub-edges)}}
               \begin{center}
                       \includegraphics[width=4 in]{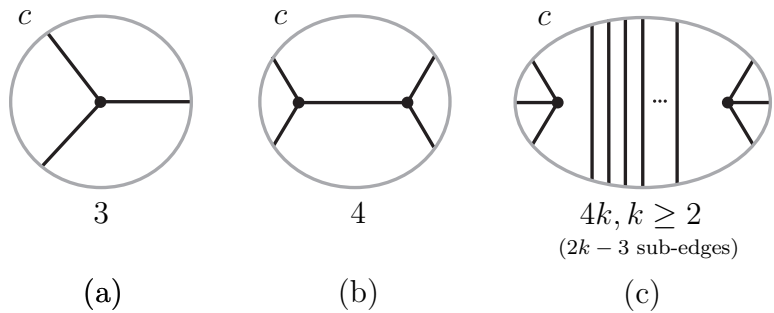}
                       \caption{Hemispheres of a tetrahedron  bounded by normal curves of lengths 3, 4, and $4k, k\geq 2$ }
                       \label{f:tetCurve}
               \end{center}
       \end{figure}

\begin{lem}
\label{l:parallel}
If two components of a normal curve have length greater than 3, then both have length $4k$ for some $k \geq 1$ and are normally parallel (i.e.~isotopic on $\bdy \Delta$ rel $\TT^1$).
\end{lem}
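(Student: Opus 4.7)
The plan is to deduce both conclusions from a sphere-level analysis of disjoint normal curves in $\partial\Delta$, building on Lemma~\ref{l:tetCurve}. The length claim is immediate: Lemma~\ref{l:tetCurve} restricts the possible lengths of a normal curve on $\partial\Delta$ to $3$, $4$, and $4k$ with $k \geq 2$, so length greater than $3$ automatically forces length $4k$ for some $k \geq 1$. The main content is the parallelism claim, which I would attack by showing that the annulus between the two curves carries a normal product structure.

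Let $c_1, c_2 \subset \partial\Delta$ be disjoint normal curves, each of length at least $4$. As disjoint simple closed curves on the sphere $\partial\Delta$, they cut it into two disks $D_1, D_2$ and an annulus $A$ with $\partial A = c_1 \cup c_2$, where $D_i$ is the component of $\partial\Delta - c_i$ disjoint from $c_j$ (for $\{i,j\} = \{1,2\}$). By cases (2) and (3) of Lemma~\ref{l:tetCurve}, each hemisphere of each $c_i$ contains exactly $2$ of the $4$ vertices of $\Delta$. Applying this to both curves forces $D_1$ and $D_2$ each to contain $2$ vertices, and hence $A$ to be vertex-free.

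Because $A$ contains no vertex of $\Delta$, the intersection $\TT^1 \cap A$ consists of arcs with both endpoints on $\partial A = c_1 \cup c_2$. The crux of the proof is to show that every such arc runs from $c_1$ to $c_2$. Given this, the edges of $\TT^1$ cut $A$ into rectangles each with one $c_1$-side and one $c_2$-side, exhibiting $A$ as a product $S^1 \times [0,1]$ with $c_i = S^1 \times \{i\}$ and $\TT^1 \cap A$ a family of vertical fibers. The radial deformation of $A$ then furnishes an isotopy of $c_1$ onto $c_2$ rel $\TT^1$, establishing normal parallelism, and simultaneously forces $|c_1 \cap \TT^1| = |c_2 \cap \TT^1|$, giving a common length $4k$.

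The main obstacle is to rule out \emph{returning} edge-arcs --- components of $\TT^1 \cap A$ with both endpoints on a single $c_i$. An innermost such arc, together with a sub-arc of $c_i$, would cut off a vertex-free subdisk $D' \subset A$ meeting $\TT^1$ only along that single edge-arc. A careful face-by-face analysis of $D'$, using the absence of vertices and the structure of normal arcs in each face, should yield a contradiction to $c_i$ being an essential embedded normal curve in $\partial \Delta$. This combinatorial verification is the delicate step I expect to absorb most of the effort; everything preceding it is essentially bookkeeping with the vertex count and the product structure.
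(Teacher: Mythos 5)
The paper does not actually prove this lemma; it is cited as a standard fact in normal surface theory (with a pointer to Thompson), so there is no argument of the authors' to compare yours against. Your strategy -- the length statement from Lemma~\ref{l:tetCurve}, then the vertex count showing that the annulus $A$ between the two curves is vertex-free, then a product structure on $A$ with the arcs of $\TT^1\cap A$ as fibers -- is exactly the standard direct proof, and all of the ``bookkeeping'' parts of your outline (each hemisphere of a curve of length $\geq 4$ contains two vertices, hence $D_1\cup D_2$ exhausts the four vertices; the edge-arcs in $A$ are properly embedded; $n$ arcs joining $c_1$ to $c_2$ cut $A$ into rectangles giving a normal isotopy and equal lengths) are correct.

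The one genuine gap is that the crux -- ruling out a \emph{returning} arc -- is only asserted (``should yield a contradiction''), and the contradiction you name is the wrong one: essentiality of $c_i$ plays no role. The step is in fact short, and you should carry it out: let $\alpha\subset e\cap A$ be a returning arc with both endpoints on $c_1$, chosen innermost, and let $D'\subset A$ be the disk it cuts off, with $\bdy D'=\alpha\cup\beta$, $\beta\subset c_1$. Since $D'$ misses $c_2$, any component of $\TT^1\cap A$ inside $D'$ would again be a returning arc cutting off a smaller disk, so by minimality $\mathrm{int}(D')\cap\TT^1=\emptyset$; hence $\mathrm{int}(D')$ lies in a single open face $F$, and $\beta$ cannot cross an edge in its interior (at such a crossing both sides of $\beta$ locally meet the edge, so $\mathrm{int}(D')$ would meet $\TT^1$). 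Thus $\beta$ is a single normal arc of $c_1$ in $F$ whose two endpoints both lie on the edge $e$ containing $\alpha$, contradicting the definition of a normal arc (its endpoints lie on distinct edges of the face). With that paragraph inserted, your proof is complete and is a perfectly good substitute for the omitted standard argument.
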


The above lemmas are standard results in normal surface theory
(see e.g.~\cite{Thompson}). We do not include proofs here.

We now study disks in tetrahedra whose boundaries are normal curves.  We call a disk in a tetrahedron a {\it triangle} if its boundary has length 3, a {\it quadrilateral} or just a {\it quad} if its boundary has length 4, and an {\it octagon} if its boundary has length 8.

\begin{lem}
\label{l:diskTypes}
Let $\Delta$ be a tetrahedron and assume that $H_{\Delta}$ is a properly embedded disk whose boundary is a normal curve.  Then
\begin{enumerate}
\item $\plex{\CC \EE(H_{\Delta},\TT^2)} = \emptyset \iff H_{\Delta}$ is a triangle or quadrilateral.
\item $\plex{\CC \EE(H_{\Delta},\TT^2)}$ is disconnected $\iff H_{\Delta}$  is an octagon.
\item $\plex{\CC \EE(H_{\Delta},\TT^2)}$ is connected $\iff \bdy H_{\Delta}$ has length $4k, k \geq 3$.

\end{enumerate}
\end{lem}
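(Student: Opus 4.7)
The plan is to reduce the classification of $\plex{\CC\EE(H_\Delta,\TT^2)}$ to a combinatorial count of separating sub-edges in the two hemispheres $S_1, S_2$ of $\bdy \Delta \setminus \bdy H_\Delta$. Since $H_\Delta$ is a disk, it admits no essential loops, so $\CC(H_\Delta,\TT^2) = \emptyset$ and the complex equals $\plex{\EE(H_\Delta,\TT^2)}$. I first argue that any edge-compressing disk $E$ lies in a single region $R_i$ of $\Delta \setminus H_\Delta$: since $E$ meets $\TT^2$ only along $\beta$, its interior lies in the open tetrahedron $\Delta$ on one side of $H_\Delta$. The arc $\beta \subset e$ has endpoints on $c := \bdy H_\Delta$ and interior disjoint from $H$ (hence from $c$), so $\beta$ lies in a single sub-edge of $e$ in the hemisphere $S_i$ corresponding to $R_i$; since $\beta$ has both endpoints on $c$ it must be an entire separating sub-edge of $S_i$. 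Thus edge-compressing disks for $(H_\Delta,\TT^2)$ in $R_i$ correspond, up to isotopy, to separating sub-edges of $S_i$, the arc $\alpha \subset H_\Delta$ being determined by $\beta$'s endpoints up to isotopy on the disk $H_\Delta$.

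Part (1) then follows immediately: for $|c| \in \{3,4\}$, Lemma~\ref{l:tetCurve}(a)-(b) gives $|c \cap e| \le 1$ for every edge $e$, so no separating sub-edges exist and $\plex{\EE(H_\Delta,\TT^2)} = \emptyset$. For Parts (2) and (3), suppose $|c| = 4k$ with $k \ge 2$. By Lemma~\ref{l:tetCurve}(c), each hemisphere contains $2k-3$ separating sub-edges, so each side has non-empty and, by Lemma~\ref{l:OneSideConnected}, connected subcomplex. Hence $\plex{\EE(H_\Delta,\TT^2)}$ has either one or two components. A bridging edge between the two sides cannot be canceling or coincident: if $E_1 \subset R_1$ and $E_2 \subset R_2$, then $E_1 \cap E_2 \subset H_\Delta$, so they do not meet on any edge of $\TT$. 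Hence a bridging edge exists iff some $E_1, E_2$ on opposite sides are disjoint, iff the arcs $\alpha_1, \alpha_2$ on the disk $H_\Delta$ are disjoint, iff the endpoints of the two corresponding separating sub-edges do not interleave on $c$.

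For $k = 2$, each hemisphere has a unique separating sub-edge, lying on one of the two within-hemisphere edges of $\Delta$. I will trace the cyclic order of $c$ through the four faces of $\Delta$, using the octagon's face-arc pattern (two normal arcs per face cutting off two different vertices, the only pattern consistent with Lemma~\ref{l:tetCurve}(c) for $k=2$), to verify that these two pairs of separating endpoints interleave on $c$; since no disjoint pair exists, the complex is disconnected. For $k \ge 3$, the $2k-3 \ge 3$ parallel separating sub-edges in each hemisphere give enough flexibility: I will choose outermost separating sub-edges in $S_1$ and $S_2$ near distinct vertices of $\Delta$, and argue that the short arcs of $c$ they bound (each containing only the three incident endpoints of one vertex's star) can be made either disjoint or nested, producing a disjoint pair and hence connectedness. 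The main obstacle is the detailed combinatorial verification in both directions: verifying interleaving for the octagon from its face-arc pattern, and showing non-interleaving for $k \ge 3$ across all admissible distributions of the edge-crossing numbers.
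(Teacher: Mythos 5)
Your reduction to separating sub-edges and the interleaving criterion for arcs on the disk $H_\Delta$ is fine, and part (1) goes exactly as in the paper. The gap is your assertion that a bridging edge between the two sides ``cannot be canceling or coincident'' because $E_1\cap E_2\subset H_\Delta$. Two edge-compressing disks on opposite sides whose arcs $\beta_1,\beta_2$ are \emph{adjacent} sub-edges of the same edge $e$ of $\TT$ meet in the single point $\beta_1\cap\beta_2$, which lies on $\bdy H_\Delta\cap e$ --- that point is on an edge of $\TT$, and after isotoping the two arcs on the disk $H_\Delta$ to meet only there, the pair is precisely a canceling pair in the sense of Definition~\ref{dDiskPairs} (indeed, by definition canceling pairs are opposite-side pairs). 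Such pairs do span edges of $\plex{\CC \EE(H_\Delta,\TT^2)}$, and the paper's proof of part (3) uses exactly them: for $k\geq 3$ one can choose adjacent sub-edges of a single edge, one in each hemisphere, and the resulting opposite-side disks meet in one point of the 1-skeleton, hence are joined by an edge of the complex.

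This is not a cosmetic issue, because your substitute plan for $k\geq 3$ --- finding a genuinely disjoint, non-interleaved pair of separating sub-edges in opposite hemispheres --- fails in general. Take the connected normal curve of length 12 with edge weights $2,2$ on the pair of edges interior to the hemispheres, $3,3$ on one pair of crossing edges and $1,1$ on the other. Tracing the curve through the four faces, each hemisphere has three separating sub-edges, and one checks that every opposite-side pair of them either shares an endpoint or has interleaved endpoints on $\bdy H_\Delta$; so there is \emph{no} disjoint opposite-side pair of edge-compressing disks at all, yet the complex is connected, exactly because of a canceling pair at a shared endpoint. So for $k\geq3$ your criterion would wrongly yield disconnectedness; the argument must be repaired by admitting canceling pairs, at which point it becomes the paper's argument. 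A smaller remark in the same vein: in the octagon case your interleaving check alone does not suffice for disconnectedness --- you must also rule out a canceling pair, i.e.\ verify that the two unique separating sub-edges do not share an endpoint (the paper does this by noting that otherwise $\beta\cup\gamma$ would form a cycle on $\bdy\Delta$).
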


\begin{proof}

Note that as a disk, $H_{\Delta}$ has no compressing disks in $\Delta$. Thus we only need to focus on edge-compressing disks in $\Delta$. Since $H_{\Delta}$ has normal boundary, it has an edge-compressing disk if and only if there is some edge of $\partial \Delta$ that meets $\partial H_{\Delta}$ at least twice. Equivalently, $H_{\Delta}$ has no edge-compressing disks if and only if $H_{\Delta}$ meets each edge of $\partial \Delta$ at most once. Hence, according to Figure \ref{f:tetCurve}, the case that $H_{\Delta}$ has no edge-compressing disks is equivalent to $H_{\Delta}$ being a triangle or a quad. This addresses case (1).

Now consider the case that $H_{\Delta}$ has an edge-compressing disk. This is true if and only if there is a sub-edge $\beta \subset \bdy \Delta$ meeting $\bdy H_{\Delta}$ exactly twice. Consider Figure \ref{f:tetCurve}. This occurs only if $\bdy H_{\Delta}$ has length $4k, k \geq 2$, and in its hemisphere $\beta$ is one of the $2k-3$ vertical arcs pictured. Note that $\beta$ splits its hemisphere into two disks, each of which can be tilted slightly into $\Delta$ to yield edge-compressing disks $E_\beta$ and $E_\beta'$ for $H_{\Delta}$. The same is true for any subarcs in the other hemisphere of $\partial \Delta$ cut along $\partial H$ (such subarcs must exist by conclusion (3) of Lemma~\ref{l:tetCurve}). In particular, $H_{\Delta}$ has edge-compressing disks on both sides.

If $k=2$, choose edge-compressing disks $E_1$ and $E_2$ for opposite sides of $H_{\Delta}$. Each must run along the sole vertical arc for its hemisphere, namely $\beta$ and the arc in the other hemisphere, call it $\gamma$. Note that $\beta$ does not meet $\gamma$ in an endpoint, for otherwise inspecting Figure~\ref{f:tetCurve} in the case $k=2$ reveals that $\beta \cup \gamma$ would form a cycle on $\partial \Delta$. Thus each end of $\beta$ and $\gamma$ connects to a different tripod of the other hemisphere, the endpoints of $\beta \subset E_1$ and $\gamma \subset E_2$ alternate on $\bdy H_{\Delta}$. It follows that $E_1$ and $E_2$ meet on $H_{\Delta}$. As any edge-compressing disks for $H_{\Delta}$ must meet $\partial \Delta$ in $\beta$ and $\gamma$, it follows that $\plex{\CC \EE(H,\TT^2)}$ is disconnected.

If $k \geq 3$, then the arc $\beta$ is not the only vertical arc in its hemisphere of $\partial \Delta$. In particular, there is an arc $\gamma$ in the other hemisphere of $\partial \Delta$ that meets $\beta$ in a single endpoint. The edge-compressing disk $E_{\beta}$ and one of the edge-compressing disks $E_{\gamma}$ or $E_{\gamma'}$ can thus be made disjoint in a neighborhood of the 1-skeleton. This implies $\plex{\CC \EE(H_{\Delta},\TT^2)}$ is connected. This completes cases (2) and (3).
\end{proof}

We now consider the entire surface $H$ in $M$. 

\begin{lem}
\label{l:disconnected}
If $\plex{\CC \EE(H,\TT^2)}$ is disconnected, then for exactly one tetrahedron $\Delta$, $\plex{\CC \EE(H \cap \Delta,\TT^2)}$ is disconnected and for any other tetrahedron $\Delta'$, $\plex{\CC \EE(H \cap \Delta',\TT^2)} = \emptyset$.
\end{lem}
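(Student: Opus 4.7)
The plan is to decompose $\plex{\CC \EE(H,\TT^2)}$ along the tetrahedra of $\TT$ and argue that any disks in distinct tetrahedra span an edge, so that the pieces must collapse to one component unless only one tetrahedron contributes.

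First I would observe that every disk in $\CC \EE(H,\TT^2)$ lies in the closure of a single tetrahedron. A compressing disk is embedded in $M - N(\TT^2)$, a disjoint union of open tetrahedra, while an edge-compressing disk $E$ satisfies $E \cap \TT^2 = \beta \subset e$ and thus has interior inside a single tetrahedron incident to $e$. Consequently,
\[
\CC \EE(H,\TT^2) \;=\; \bigsqcup_{\Delta}\CC \EE(H \cap \Delta,\TT^2),
\]
and the task reduces to controlling edges of $\plex{\CC \EE(H,\TT^2)}$.

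The key step is to show that whenever $D \in \CC \EE(H \cap \Delta,\TT^2)$ and $D' \in \CC \EE(H \cap \Delta',\TT^2)$ with $\Delta \neq \Delta'$, the vertices $[D]$ and $[D']$ span an edge. Their interiors lie in disjoint open tetrahedra and so are disjoint; the $H$-portion of either boundary only touches the boundary of its tetrahedron on $\TT^1$ (by Lemma~\ref{l:each_face_normal_arcs} the boundary arcs are normal, with endpoints on edges), and the $\beta$-portion already sits on $\TT^1$. Thus $D \cap D' \subset \TT^1 \subset N(\TT^1)$, which by Definition~\ref{dDiskComplex} means $D,D'$ are ``disjoint away from a neighborhood of the 1-skeleton'' and therefore span an edge.

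Finally, I would suppose for contradiction that distinct tetrahedra $\Delta_1,\Delta_2$ both have non-empty $\plex{\CC \EE(H \cap \Delta_i,\TT^2)}$. Fix $D' \in \CC \EE(H \cap \Delta_2,\TT^2)$. For any two $D,E \in \CC \EE(H \cap \Delta_1,\TT^2)$, the previous step yields the path $[D]$--$[D']$--$[E]$, showing $[D],[E]$ are in one component. Iterating, every disk in any contributing $\Delta_i$ lies in a single component of $\plex{\CC \EE(H,\TT^2)}$, making it connected and contradicting the hypothesis. Hence at most one tetrahedron has a non-empty disk complex; since $\plex{\CC \EE(H,\TT^2)}$ is non-empty (being disconnected), exactly one tetrahedron $\Delta$ does, and its complex equals the disconnected $\plex{\CC \EE(H,\TT^2)}$. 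The main point requiring care is the cross-tetrahedron edge claim when $D, D'$ are both edge-compressing disks meeting the same edge $e$, since then $\beta,\beta' \subset e$ may overlap nontrivially and the two disks cannot always be made disjoint; but the intersection still lies in $\TT^1$, which is all one needs for the edge criterion of Definition~\ref{dDiskComplex}.
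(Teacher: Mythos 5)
Your proof is correct, but it takes a genuinely different route from the paper's. The paper leans on Lemma~\ref{l:OneSideConnected}: disconnectedness yields a pair of intersecting disks $D,E$ on \emph{opposite} sides of $H$, which therefore lie in a single tetrahedron $\Delta$; if some other tetrahedron carried a disk $D'$ (taken without loss of generality on the same side as $D$), then $D'$ and $E$ would span an edge between the two sides, and one-sided connectivity would force $\plex{\CC\EE(H,\TT^2)}$ to be connected. You never use sides of $H$ or Lemma~\ref{l:OneSideConnected} at all; instead you prove the stronger statement that \emph{any} two disks in distinct tetrahedra span an edge, and then run a hub-and-spoke graph argument. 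That buys a more self-contained and explicit proof (you also spell out, more than the paper does, why the surviving tetrahedron's complex is itself disconnected), at the cost of having to analyze the cross-tetrahedron edge condition carefully. On that point your argument is slightly loose: Definition~\ref{dDiskComplex} glosses ``disjoint away from a neighborhood of the 1-skeleton'' as the precise trichotomy of Definition~\ref{dDiskPairs}, so ``$D\cap D'\subset\TT^1$'' is not literally the criterion. The gap closes, however, because the edge arc $\beta$ of an edge-compressing disk joins \emph{adjacent} points of $H\cap e$ (its interior is disjoint from $H$), so two such arcs on a common edge are either disjoint, share a single endpoint (a canceling pair), or coincide (a coincident pair); adding that one sentence makes your key step airtight. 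Finally, your appeal to Lemma~\ref{l:each_face_normal_arcs} is unnecessary: the fact that each disk meets $\TT^2$ only in $\beta\subset e$ (or not at all, for compressing disks) is part of the definitions of the disks themselves, which is all your localization step needs.
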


\begin{proof}
If $\plex{\CC \EE(H,\TT^2)}$ is disconnected, there must be compressing or edge-compressing disks $D$ and $E$ on opposite sides of $H$ that meet. Therefore, both $D$ and $E$ lie in the same tetrahedron, $\Delta$. Suppose $\Delta'$ is some other tetrahedron for which  $\plex{\CC \EE(H \cap \Delta',\TT^2)}$ is non-empty. Let $D'$ be a compressing or edge-compressing disk for $H \cap \Delta'$, and assume without loss of generality that $D'$ is on the same side of $H$ as $D$. Then, as $E$ and $D'$ are in different tetrahedra, they are disjoint. It follow from Lemma~\ref{l:OneSideConnected} that $\plex{\CC \EE(H,\TT^2)}$ is connected.
\end{proof}

We will deal with the latter case of Lemma~\ref{l:disconnected} first. 

\begin{lem}
\label{l:empty_in_tetrahedron}
Let $H$ be a surface and $\Delta'$ a tetrahedron. Then, the complex $\plex{\CC \EE(H \cap \Delta',\TT^2)} = \emptyset$ if and only if each component of $H \cap \Delta'$ is a normal triangle or quadrilateral.
\end{lem}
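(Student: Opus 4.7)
The plan is to prove both directions separately. The $(\Leftarrow)$ direction is straightforward: if each component of $H \cap \Delta'$ is a normal triangle or quadrilateral, then each is a disk (so $\CC(H \cap \Delta',\TT^2) = \emptyset$ since disks carry no essential loops), and each edge of $\partial \Delta'$ meets any such component at most once, so no edge-compressing disk exists for any single component. Since the arc $\alpha$ of any edge-compressing disk for $H \cap \Delta'$ is connected, it lies on a single component, and therefore $\EE(H \cap \Delta',\TT^2) = \emptyset$ as well.

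For $(\Rightarrow)$, I would assume $\plex{\CC \EE(H \cap \Delta',\TT^2)} = \emptyset$; by Lemma~\ref{l:each_face_normal_arcs}, the boundary of every component of $H \cap \Delta'$ is a normal curve. The first step is to show every component $F$ is a disk. If not, $F$ carries an essential simple closed curve $\gamma$, which bounds an embedded disk $D$ in the 3-ball $\Delta'$. Putting $D$ in general position with $H$ and applying a standard innermost-loop argument, using the irreducibility of $\Delta'$ to remove inessential intersection loops via ball-bounding spheres, either produces a compressing disk from an innermost essential intersection loop or reduces to $D \cap H = \gamma$; after a slight push of $D$ off $\partial \Delta'$, the resulting disk lies in $\CC(H \cap \Delta',\TT^2)$, contradicting emptiness in either case.

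The second step is to apply Lemma~\ref{l:diskTypes}(1) componentwise: each disk component $F$ is a triangle or quadrilateral provided $\plex{\CC \EE(F,\TT^2)} = \emptyset$. A disk has no compressing disks, so I only need to rule out edge-compressing disks. Suppose for contradiction $E$ is an edge-compressing disk for $F$ viewed alone, with $\partial E = \alpha \cup \beta$ where $\alpha \subset F$ and $\beta \subset e$. The intersection $E \cap (H \setminus F)$ consists of loops and arcs in $E$; loops are eliminated by innermost-loop arguments (the other components are also disks by the first step, so inner loops bound subdisks there and irreducibility allows an isotopy reducing intersections). Crucially, because $F$ is disjoint from the other components, arcs of $E \cap (H \setminus F)$ cannot have endpoints on $\alpha \subset F$, so both endpoints of every such arc lie on $\beta$. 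An outermost such arc $\gamma$ on some component $F_k$ then cuts off a subdisk $D'' \subset E$ with $\partial D'' = \gamma \cup \beta'$, $\beta' \subset e$, whose interior is disjoint from $H$; this $D''$ is a genuine edge-compressing disk in $\EE(H \cap \Delta',\TT^2)$, contradicting emptiness. The main obstacle is precisely this outermost-arc step---the key structural point being that the disjointness of $F$ from the other components pins the arc endpoints to $\beta$, ensuring the outermost cut yields an honest edge-compressing disk rather than an object without a clear type.
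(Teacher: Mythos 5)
Your $(\Leftarrow)$ direction and the reduction in your second step are fine: the observation that all endpoints of arcs of $E\cap(H\setminus F)$ are forced onto $\beta\subset e$, so that an outermost arc cuts off a genuine edge-compressing disk for $H\cap\Delta'$, is a legitimate (and slightly different) route to the paper's conclusion --- the paper instead picks an \emph{outermost} component with boundary of length greater than $4$ and tilts a sub-disk of a hemisphere of $\partial\Delta'$ past the triangles, which avoids the cut-and-paste but is less explicit about why componentwise emptiness suffices.

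The genuine gap is at the start of your $(\Rightarrow)$ argument, in the claim that an essential simple closed curve $\gamma$ on a non-disk component $F$ ``bounds an embedded disk $D$ in the 3-ball $\Delta'$.'' Simple connectivity of $\Delta'$ only gives a singular disk; an essential curve on a properly embedded surface in a ball can be knotted (take a quadrilateral with a knotted tube attached: the core of the tube is essential in $F$ but bounds no embedded disk in $\Delta'$), and for such a $\gamma$ your entire innermost-loop argument never gets off the ground. What is true is that \emph{some} essential curve on $F$ bounds an appropriate disk, but to get it you must invoke the Loop Theorem/Dehn's Lemma (since $\pi_1(\Delta')=1$, the inclusion of a non-simply-connected $F$ is not $\pi_1$-injective, so there is an embedded compressing disk, which you can then make disjoint from the other components by your innermost-loop surgeries), or else quote the standard fact that a properly embedded, incompressible and $\partial$-incompressible surface with boundary in a ball is a union of disks --- and in the latter case you must additionally explain how a $\partial$-compressing disk, whose arc $\beta$ lies in $\partial\Delta'$ but not necessarily in an edge, is converted into an element of $\CC\EE(H\cap\Delta',\TT^2)$. (The paper simply asserts that emptiness of the complex forces all components to be disks, so you are supplying a detail it omits; but as written your justification is the one step that would actually fail.)
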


\begin{proof}
($\Leftarrow$)
If each component of $H \cap \Delta'$ is a disk, then $\plex{\CC \EE(H \cap \Delta',\TT^2)}$ has no vertices corresponding to compressing disks. Moreover, no component of $H \cap \Delta'$ can meet an edge of $\bdy \Delta'$ more than once (the first two cases in Figure \ref{f:tetCurve}). Hence, $\plex{\CC \EE(H \cap \Delta',\TT^2)}$ has no vertices corresponding to edge compressions either.

($\Rightarrow$) If $\plex{\CC \EE(H \cap \Delta',\TT^2)}=\emptyset$ then $H \cap \Delta'$ must consist of disks. If $H \cap \Delta'$ has a component whose boundary (a normal curve) has length greater than $4$, then an outermost such component contains an edge-compressing disk that can be tilted past all triangles, contradicting $\plex{\CC \EE(H \cap \Delta',\TT^2)}=\emptyset$.
\end{proof}

Now consider the case of Lemma~\ref{l:disconnected} in which $H \cap \Delta$ has disconnected disk complex in the tetrahedron $\Delta$. We constrain the topological type of $H$ in $\Delta$ via the following:

\begin{lem}
\label{l:almostNormal}
If $\plex{\CC \EE(H \cap \Delta,\TT^2)}$ is disconnected, then $H \cap \Delta$ is the union of (perhaps an empty set of) triangles and quadrilaterals, and exactly one exceptional piece which is an octagon or an unknotted annulus whose boundary curves each have either length 3 or 4.
\end{lem}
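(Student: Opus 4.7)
Let $\DD := \plex{\CC\EE(H \cap \Delta, \TT^2)}$. The plan is to first reduce to a unique ``exceptional'' component of $H \cap \Delta$ carrying all the disks of $\DD$, and then classify its topological type.

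By the corollary to Lemma~\ref{l:OneSideConnected}, $\DD$ has exactly two components $\DD_+$ and $\DD_-$, one on each side of $H$ in $\Delta$, both non-empty. First, observe that if two distinct components $K_1, K_2$ of $H \cap \Delta$ each supported disks in $\DD$, then those disks could be isotoped into disjoint sub-balls of $\Delta$; a $+$ disk for $K_1$ paired with a $-$ disk for $K_2$ would then form a disjoint pair and hence span an edge joining $\DD_+$ to $\DD_-$, contradicting disconnectedness. Thus a unique exceptional component $K$ of $H \cap \Delta$ carries every disk of $\DD$, and all other components of $H \cap \Delta$ have empty disk complex and are therefore normal triangles or quadrilaterals by Lemma~\ref{l:empty_in_tetrahedron}.

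Next I would classify $K$. If $K$ is a disk, then Lemma~\ref{l:diskTypes}(2) forces $K$ to be an octagon. Suppose instead $K$ is not a disk; then $K$ is compressible in the ball $\Delta$. Applying the same disjoint-boundary argument to two compressing disks on opposite sides of $K$ (whose boundary circles on $K$ can be isotoped to be disjoint essential loops) shows that all compressing disks for $K$ lie on a single side, say the $+$ side, and hence $\DD_-$ consists only of edge-compressing disks. I would then exclude any component of $\partial K$ of length $4k$ with $k \geq 2$: by Lemma~\ref{l:tetCurve}(c), such a curve supplies $2k-3$ parallel sub-edges in each hemisphere of $\partial \Delta$, yielding several non-equivalent edge-compressing disks that can be arranged disjoint from a chosen $+$ compressing disk, reconnecting $\DD_+$ and $\DD_-$. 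So $\partial K$ consists only of triangles and quadrilaterals.

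Finally, I would show $\chi(K) = 0$. If $\chi(K) \leq -1$, then a full $+$-compression of $K$ takes at least two steps, producing at least two non-equivalent $+$ compressing disks with disjoint essential boundaries on $K$; one of these could then be chosen disjoint from a given $-$ edge-compressing disk, again joining $\DD_+$ and $\DD_-$ --- contradiction. Hence $K$ is a planar surface with $\chi(K) = 0$ and triangle/quadrilateral boundary, i.e., an annulus with two normal boundary curves of length $3$ or $4$. A single $+$ compression then returns the two normal disks it was tubed from, exhibiting $K$ as the ``unknotted annulus'' of the statement. The main obstacle I anticipate is this last classification of non-disk $K$: each conceivable higher complexity (positive genus, extra boundary components, or long boundary curves) must be shown to produce a disjoint pair reconnecting $\DD_+$ and $\DD_-$, requiring careful case analysis reminiscent of Lemma~\ref{l:diskTypes}.
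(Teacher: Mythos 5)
Your reduction to a single exceptional component $K$ (all other pieces being normal triangles and quads via Lemma~\ref{l:empty_in_tetrahedron}) and your treatment of the disk case via Lemma~\ref{l:diskTypes} are sound and parallel the paper. The gap is in the non-disk case, and it occurs at the pivotal step: you assert that two compressing disks on opposite sides of $K$ have ``boundary circles on $K$ [that] can be isotoped to be disjoint essential loops.'' This is precisely what cannot be assumed --- it is the negation of strong irreducibility, and at this stage $K$ is just some compressible surface with normal boundary in the tetrahedron, so nothing rules out compressing disks on opposite sides all of whose boundaries intersect. The paper gets around this by manufacturing \emph{special} compressing disks: the innermost components $b_i$ of $\partial K$ on $\partial\Delta$ bound disks in $\partial\Delta$ which, pushed into $\Delta$, are boundary-parallel and hence can be isotoped off any compressing disk for the other side. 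Disconnectedness then forces every disk on the other side to be an edge-compressing disk whose edge-arc has an endpoint on each $b_i$, giving $n\le 2$, ruling out $n=1$, and then the band sum of $B_1$ and $B_2$ along $E$ must be a trivial disk, which is what proves $K$ is an \emph{unknotted} annulus. Your proposal has no substitute for this mechanism, and in particular unknottedness of the annulus is never addressed.

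The same unjustified disjointness is used twice more. When excluding boundary components of length $4k$, $k\ge 2$, you claim the edge-compressing disks supplied by Figure~\ref{f:tetCurve}(c) ``can be arranged disjoint from a chosen $+$ compressing disk''; but those tilted disks may lie on the $+$ side themselves (in which case their disjointness from a $+$ disk proves nothing), and even when they lie on the $-$ side, an arc and a circle on $K$ can intersect essentially, so disjointness needs an argument. Likewise, in the Euler characteristic step you assert that one of two disjoint compressing circles ``could be chosen disjoint from a given $-$ edge-compressing disk,'' which fails in general: an essential arc on a surface of higher complexity can meet two disjoint, non-isotopic essential circles essentially. In the paper the analogous disjointness is only available \emph{after} the annulus structure is established: the arcs $e_1\cup e_2$ then form a core circle of $A$ meeting the spanning arc $E\cap A$ in one point, so one of $e_1,e_2$ misses $E$. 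Your argument runs in the opposite order, invoking these disjointness facts to derive the annulus structure, so the classification of the non-disk case --- the heart of Lemma~\ref{l:almostNormal} --- does not go through as written.
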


\begin{proof}
By Lemma \ref{l:parallel} we can write $H \cap \Delta$ as the disjoint union
$$H = D_3 \cup D_{4k} \cup A$$
where $D_{\ell}$ is a union of disks whose boundaries have normal length $\ell$ and $A$ is the optimistically labelled union of all non-disk surfaces.
\medskip

{\it Claim. If $A=\emptyset$, then $D_{4k}$ is an octagon and the lemma holds.} 
\smallskip

If $A=\emptyset$ then $H$ is incompressible, and so there must be a pair of edge-compressing disks $E$ and $E'$ for opposite sides of $H$.  Moreover, $E$ and $E'$ meet, so it must be that they are both edge-compressing disks for the same disk component $D \subset D_{4k}$.  By Lemma \ref{l:diskTypes}, it is the case that $k \geq 2$ and $\bdy D$ bounds two hemispheres as in Figure \ref{f:tetCurve}, where $E$ and $E'$ split each hemisphere in two.  Triangles are the only disks disjoint from $D \cup E \cup E'$, so $D$ must be the sole component $D_{4k}=D$. Note that any edge-compressing disk for $H \cap \Delta$ must thus be an edge-compressing disk for $D$, and conversely, any edge-compressing disk for $D$ can be pushed past the elements of $D_3$ to an edge-compressing disk for $H \cap \Delta$. Thus $\plex{\CC \EE(H \cap \Delta, \TT^2)}=\plex{\CC \EE(D, \TT^2)}$ is disconnected. By Lemma \ref{l:diskTypes}, $D$ is an octagon. This proves the claim.
\medskip

We proceed with the assumption that $A \neq \emptyset$.

\medskip

{\it Claim. If $A \neq \emptyset$, then $A$ is an unknotted annulus.} 
\smallskip

Let $b_1, b_2, \ldots b_n$ denote all components of $\bdy A$ that are innermost on $\bdy \Delta$. Then $b_i$ bounds a disk in $\bdy \Delta$. Let $B_i$ denote this disk after pushing its interior into $\Delta$ until it is disjoint from the disk collection $D_3 \cup D_{4k}$. Since $\bdy B_i = b_i$ bounds a disk in $\bdy \Delta$, it can be isotoped to be disjoint from any compressing disk for the other side of $A$ (which happens to be the only component of $H \cap \Delta$ that can have compressing disks).  It follows that $A$ must have an edge-compressing disk $E$ on the other side that meets every $b_i$. But $E \cap \partial \Delta$ has only two endpoints, thus $n \leq 2$. If $n=1$, then $A$ has only one boundary component, $b_1$, and it bounds disks on {\it both} sides of $A$, which can be isotoped to be disjoint from each other and from $D_3 \cup D_{4k}$. This contradicts the fact that $\plex{\CC \EE(H,\TT^2)}$ is disconnected. Thus, $n=2$.

Let $N(E)$ denote a neighborhood of $E$ in $\Delta$, and $\bdy _{fr} N(E)$ the frontier of $N(E)$ in $\Delta$. Let $R$ denote the component of $\bdy _{fr} N(E)-(B_1 \cup B_2)$ that meets both $B_1$ and $B_2$. Finally, consider the disk $(B_1 \cup B_2)-N(E) \cup R$ (see Figure~\ref{f:band_sum}). This disk is on the same side of $A$ as $B_1$ and $B_2$, but disjoint from $E$. To avoid a contradiction, we conclude that the banded disk is a trivial disk, i.e., $b_1$ and $b_2$ co-bound an unknotted annulus. Moreover, $A$ does not contain a component distinct from this annulus, for any other component would be forced to have boundary disjoint from $\bdy (A \cup E)$, implying $\bdy A$ had more than 2 innermost curves. This completes the proof of the claim.

\begin{figure}[h]

\psfrag{1}{$B_1$}
\psfrag{2}{$B_2$}
\psfrag{e}{$E$}
                       \begin{center}
                       \includegraphics[width=3.5 in]{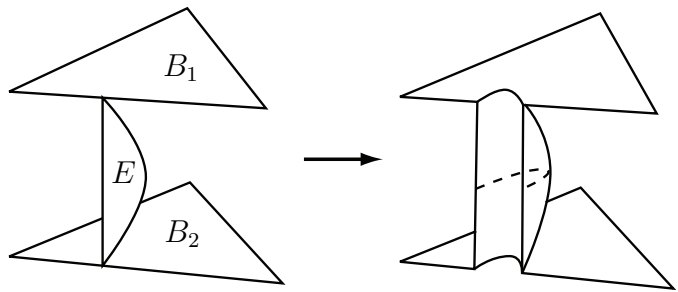}
                       \caption{The disk $(B_1 \cup B_2)-N(E) \cup R$.}
                       \label{f:band_sum}
               \end{center}
       \end{figure}

\medskip

Now it is a matter of restricting the lengths of $\partial B_1$ and $\partial B_2$ as normal curves. This is established with the final claim.

\medskip

{\it Claim: Each component of $\bdy A$ has length at most 4.}
\smallskip

Consider the hemisphere $S_1$ of $\bdy \Delta$ bounded by $b_1$ that does not contain $b_2$. If $|b_1|>4$ then $S_1$ contains a subarc $\alpha$ of an edge, as in Figure \ref{f:tetCurve}. The arc $\alpha$ divides $S_1$ into two disks, and both can be tilted into $\Delta$ (and pushed past $D_3$), to form edge-compressing disks $E_1$ and $E_2$ for $H \cap \Delta$. Now note that $E \cap A$ is an arc that connects $b_1$ to $b_2$, whereas both $e_1=E_1 \cap A$ and $e_2=E_2 \cap A$ connect two points of $b_1$. Furthermore, $e_1 \cup e_2$ is isotopic to a core circle of $A$, and can thus be isotoped to meet $E \cap A$ exactly once. It follows that one of $e_1$ or $e_2$ is disjoint from $E \cap A$. Without loss of generality, assume the former. Thus $E_1$ and $E$ are disjoint edge-compressing disks for $H \cap \Delta$ on opposite sides. By Lemma \ref{l:OneSideConnected}, this violates our assumption that $\plex{\CC \EE(H \cap \Delta,\TT^2)}$ is disconnected.
\end{proof}

Proposition~\ref{p:normalOrAlmostNormal} now immediately follows. The forward direction is implied by Lemmas \ref{l:disconnected} and \ref{l:almostNormal}, and the backward direction is a straightforward observation. This completes the final step in the proof of Theorem~\ref{t:main1}.

\bibliographystyle{alpha}
\bibliography{Almost_normal_bibliography}

\end{document}